\title[] {On some properties of free commutators with semicircular variables}
\author[M. Popa]{Mihai Popa}
\address[M. Popa]{Department of Mathematics\\ University of Texas at San Antonio\\ One UTSA Circle San Antonio\\ Texas 78249, USA
	\\ and “Simon Stoilow” Institute of Mathematics of the Romanian Academy\\ P.O. Box 1-764\\ 014700 Bucharest, Romania}
\email{mihai.popa@utsa.edu}
\author[K. Szpojankowski]{Kamil Szpojankowski}
\address[K. Szpojankowski]{Institute of Mathematics of the Polish Academy of Sciences, ul. \'Sniadeckich 8, 00–656 Warszawa, Poland\\and
	Wydzia\l{} Matematyki i Nauk Informacyjnych\\
	Politechnika Warszawska\\
	ul. Koszykowa 75\\
	00-662 Warszawa, Poland.}
\email{kamil.szpojankowski@pw.edu.pl}
\thanks{KSz: This research was funded in part by National Science Centre, Poland WEAVE-UNISONO grant BOOMER 2022/04/Y/ST1/00008.
}
\newtheorem{claim}{}[section]
\newtheorem{notation}[claim]{Notation}
\newtheorem{thm}[claim]{Theorem}
\newtheorem{lemma}[claim]{Lemma}
\newtheorem{prop}[claim]{Proposition}
\newtheorem{cor}[claim]{Corollary}
\newcommand{\bE}{\mathbb{E}}
\newcommand{\cA}{\mathcal{A}}
\begin{document}

\begin{abstract}
We investigate commutators of free variables of the form \( i[x, s] \), where \( s \) is a semicircular element. We show that although \( s \) and \( i[x, s] \) are not free, their sum nevertheless satisfies the free additive convolution identity
\[
\mu_{s + i[x, s]} = \mu_s \boxplus \mu_{i[x, s]}.
\]
Furthermore, we prove that the polynomial \( x + i[x, s] \) is freely infinitely divisible whenever \( x \) itself is freely infinitely divisible.

\end{abstract}

\maketitle


\section{Introduction}

The semicircular distribution is often seen as the Free Probability analogue of the Gaussian distribution (see, for example \cite{biane}, \cite{mingo-speicher}, \cite{nisp}).
Still, there are many properties of each of these distributions that do not find analogues for the other (such as in \cite{eisembaum}). In the classical setting any commutator is trivial, which is obviously not the case in the free setting. 
The  paper presents some properties of the commutator $ i[x, s] $, where $s $ is a semicircular random variable and $ x $ is a random variable free from $ s $.
 First, it is well known that equality between the distribution of \( X + Y \) and the convolution of the marginal distributions of \( X \) and \( Y \) does not, in general, imply that \( X \) and \( Y \) are independent.  
Independence requires that the joint characteristic function factorizes; that is, for any \( s,t \in \mathbb{R} \),
\[
\varphi_{(X,Y)}(s,t) = \varphi_X(s)\,\varphi_Y(t).
\]
In contrast, the convolution property only requires that for any \( s \in \mathbb{R} \),
\[
\varphi_{X+Y}(s) = \varphi_{(X,Y)}(s,s) = \varphi_X(s)\,\varphi_Y(s).
\]

A similar phenomenon occurs in the context of \emph{freeness} and \emph{free convolution}.  
Namely, the fact that a sum \( a + b \) has distribution equal to the free convolution of the marginal distributions, \( \mu_a \boxplus \mu_b \), does not necessarily imply that \( a \) and \( b \) are freely independent.  
While the classical analogue of this observation has been extensively studied and illustrated by numerous examples in the literature (see, for example, \cite{hamedani}), \cite{schennach}), only a few analogous examples are known in the framework of free probability.  
In this paper, we provide a broad new class of such examples.

In~\cite{LehnerSzpojankowski}, the authors showed - using subordination techniques - that if \( s \) is a semicircular element and \( x \) is a Bernoulli random variable free from \( s \), then the element \( s + i[s,x] \) has distribution
\[
\mu_{s + i[s,x]} = \mu_s \boxplus \mu_{i[s,x]},
\]
yet \( s \) and \( i[s,x] \) are not free, since their fourth joint free cumulant does not vanish.  
We extend this result to arbitrary distributions of \( x \).  
Our approach differs from that of~\cite{LehnerSzpojankowski}: we develop a combinatorial description of the distribution of \( s + i[s,x] \).  
Thus we show that the following theorem.

\begin{thm}
	\label{thm:main1}
	Let \( s \) and \( x \) be free random variables such that \( s \) is semicircular. Then the distribution of \( s + i[s, x] \) is the free additive convolution of the distributions of \( s \) and \( i[s, x] \). Moreover, the elements \( s \) and \( i[s, x] \) are not free.
\end{thm}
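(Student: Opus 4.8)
The plan is to reduce the convolution identity to a statement about free cumulants and then prove it combinatorially. Write $c = i(sx - xs)$ and recall that $\mu_{s+c} = \mu_s \boxplus \mu_c$ holds if and only if the free cumulants add, $\kappa_n(s+c,\dots,s+c) = \kappa_n(s,\dots,s) + \kappa_n(c,\dots,c)$ for every $n$. Expanding the left-hand side by multilinearity, this is equivalent to the vanishing, for each $n$, of the sum of all \emph{mixed} cumulants $\kappa_n(d_1,\dots,d_n)$ with $d_j \in \{s,c\}$ and both letters occurring. At $n=2$ one checks directly that $\kappa_2(s,c)=\kappa_2(c,s)=0$, using $\varphi(c)=0$ and $\varphi(sc)=i\sigma^2(\varphi(x)-\varphi(x))=0$, so the content lies at higher orders. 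Since $s$ and $c$ are not assumed free, the individual mixed cumulants need not vanish; only their sum must.

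Next I would expand each mixed cumulant using the Krawczyk--Speicher formula for free cumulants whose entries are products: substituting $c = i(sx-xs)$ turns $\kappa_n(d_1,\dots,d_n)$ into a signed sum over non-crossing partitions $\pi$ of the resulting string of $s$- and $x$-letters, subject to the connectivity constraint $\pi \vee \hat\sigma = \hat 1$, where $\hat\sigma$ groups the two letters of each commutator. Two structural facts then prune this sum drastically. First, freeness of $s$ and $x$ forces every block of $\pi$ to be monochromatic. Second, semicircularity of $s$ (only $\kappa_2(s,s)=\sigma^2$ survives, and $\varphi(s)=0$) forces the $s$-blocks to be pairs. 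Since each entry contributes exactly one $s$-letter, there are exactly $n$ of them, so the surviving configurations are a non-crossing pairing of the $n$ $s$-letters together with a non-crossing partition of the $x$-letters (weighted by free cumulants of $x$), the two being mutually non-crossing and jointly connected, each commutator carrying a sign $i$ or $-i$ according to the $sx$ versus $xs$ order. In particular $n$ must be even, which already explains why all odd cumulants vanish and why $\mu_c$ and $\mu_{s+c}$ are symmetric.

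The heart of the proof, and the step I expect to be the main obstacle, is to show that the signed sum over these configurations vanishes as soon as at least one pure-$s$ entry is present. I would attack this with a sign-reversing involution: mark a distinguished commutator entry (say, of least index) and swap its internal order $sx \leftrightarrow xs$, which flips the sign $i \leftrightarrow -i$. The difficulty is that this swap exchanges two adjacent letters lying in \emph{different} blocks (an $s$-pair and an $x$-block), and an adjacent transposition does not preserve non-crossingness in general; it can turn $\{1,2\},\{3,4\}$ into the crossing $\{1,3\},\{2,4\}$. Thus the involution cannot keep the blocks fixed, but must reroute the non-crossing pairing locally so as to land on a genuinely non-crossing, still-connected configuration of equal weight. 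Making this rerouting canonical, fixed-point-free on the mixed terms, and weight-preserving is the combinatorial crux; the presence of at least one pure-$s$ entry is what guarantees that the marked commutator always has a partner to interact with, so that no fixed points survive.

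Finally, for the assertion that $s$ and $c$ are not free, it suffices to exhibit one nonvanishing mixed cumulant. The second- and third-order mixed cumulants all vanish (the latter by the parity remark above), so the first candidate is at order four; I would compute a suitable order-four mixed cumulant, e.g. $\kappa_4(s,c,s,c)$, from the same expansion and check that it reduces to a nonzero multiple of the variance $\kappa_2(x,x)=\varphi(x^2)-\varphi(x)^2$. Hence whenever $x$ is not a scalar (so that $c \neq 0$), this mixed cumulant is nonzero and $s,c$ fail to be free, even though all mixed cumulants sum to zero at every order.
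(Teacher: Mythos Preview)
Your overall architecture matches the paper's exactly: reduce $\mu_{s+c}=\mu_s\boxplus\mu_c$ to the vanishing of each sum of mixed cumulants, expand via the products-as-entries formula, prune by freeness (monochromatic blocks) and semicircularity ($s$-blocks are pairs), and then kill the surviving configurations by a sign-reversing involution. The non-freeness part is also fine; the paper happens to use $\kappa_4(s,c,c,s)$, but your $\kappa_4(s,c,s,c)$ works equally well and equals $-2\kappa_2(s)^2\kappa_2(x,x)$.

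The gap is in the involution. Your move ``mark the least-index commutator and flip $sx\leftrightarrow xs$'' is aimed at the wrong object, and the vague ``rerouting'' you anticipate is exactly where your argument would stall: flipping an arbitrary commutator can destroy either non-crossingness or the connectivity constraint $\pi\vee\hat\sigma=\hat 1$, and there is no canonical local repair. The paper's key structural observation, which you are missing, is this: in any surviving configuration, a \emph{pure-$s$} entry must be paired (in the $s$-pairing) with a commutator-$s$ that is \emph{cyclically adjacent} to it. (If the two $s$'s were not adjacent, one shows that an interval separates in $\pi\vee\hat\sigma$, contradicting connectivity.) Once you know this, the correct involution is centered on the pure-$s$ entries, not on commutators: locate the first $s$-pair that involves a pure-$s$ (so it looks like $(\underline{s},sx)$ or $(xs,\underline{s})$ in consecutive positions, possibly wrapping around) and swap it to the other form. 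Concretely this moves the commutator one slot, changes its type from $sx$ to $xs$ (so $|D|$ changes by $1$ and the sign flips), and because only two adjacent letters are touched, non-crossingness and connectivity are preserved automatically. That is the ``partner'' your pure-$s$ interacts with; the adjacency lemma is what makes the involution canonical and fixed-point-free.
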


This result yields new insight about \emph{free infinite divisibility} (FID).  
Arizmendi, Hasebe and Sakuma~\cite{AHS} showed that commutators preserve free infinite divisibility: if \( x \) and \( y \) are free random variables whose distributions are FID, then the distribution of \( i[x,y] \) is also FID. Free infinite divisibility of commutators was studied further by Ejsmont and Lehner \cite{LehnerEjsmont}

An immediate consequence of Theorem~\ref{thm:main1} is that if \(x\) and \(s\) are free and \( x \) is FID, then \( i[x,s] \) is FID as well, and hence so is \( s + i[x,s] \).  
This observation naturally raises the question of whether, under the same assumptions, the element \( x + i[x,s] \) is also FID. Our answer is affirmative, although in this case the distribution of \( x + i[x,s] \) no longer equals \( \mu_x \boxplus \mu_{i[x,s]} \).

The proof proceeds in two steps.  
First, we derive a combinatorial description of the free cumulants of \( x + i[x,s] \).  
To formulate this result, we introduce the following notation.  
Fix an interval partition on \( n \) elements, \( \sigma \in \mathrm{Int}(n) \), and let \( |\sigma| = k \) denote the number of blocks of \( \sigma \).  
For such a partition \( \sigma \), consider a non-crossing partition \( \pi \in \mathrm{NC}(k) \).  
We denote by \( \pi(\sigma) \) the non-crossing partition \( \rho \in \mathrm{NC}(n) \) whose blocks are unions of the blocks of \( \sigma \) corresponding to indices in the same block of \( \pi \).  
More precisely, if \( \sigma = \{W_1, \ldots, W_k\} \) and \( \pi = \{V_1, \ldots, V_m\} \), then
\[
\rho = \pi(\sigma) = \{Z_1, \ldots, Z_m\}, \quad \text{where } Z_i = \bigcup_{j \in V_i} W_j, \; i = 1,\ldots,m.
\]

This construction is illustrated in the following example.
\begin{figure}
\begin{tikzpicture}[scale=1.0, every node/.style={font=\small}, xshift=-3cm]
	
	\node at (-1.0,2.5) {\textbf{Left: $\sigma$ and $\pi$}};
	\node at (6,2.5) {\textbf{Right: $\rho = \pi(\sigma)$}};
	
	\foreach \i in {1,...,8}
	\node[dotted] (L\i) at (0.6*\i-3.5,0) {\i};
	\draw[thick,blue] (L1.north) |- ($(L2.north)+(0,0.5)$)|-(L2.north);
	\draw[thick,blue] (L3.north) |- ($(L4.north)+(0,0.5)$)|-(L4.north);
	\draw[thick,blue] (L5.north) |- ($(L6.north)+(0,0.5)$)|-(L6.north) |-($(L6.north)+(0,0.5)$)|-($(L7.north)+(0,0.5)$)|-(L7.north);
	\draw[thick,blue] ($(L8.north)+(0,0.5)$)|-(L8.north);	
	\node[below=8pt of L1.south , anchor=west] 
	{$\sigma=\{\{1,2\},\{3,4\},\{5,6,7\},\{8\}\}$};
	
	\coordinate (B1) at (-2.6,1);
	\coordinate (B2) at (-1.4,1);
	\coordinate (B3) at (0.1,1);
	\coordinate (B4) at (1.3,1);
	
	\draw[thick,red](B2.north)|- ($(B4.north)+(0,0.75)$)|-(B4.north);
	\draw[thick,red] ($(B3.north)+(0,0.5)$)|-(B3.north);
	\draw[thick,red] ($(B1.north)+(0,0.5)$)|-(B1.north);
	
	\node at (-0.5,2) {$\pi=\{\{1\},\{2,4\},\{3\}\}$};
	
	\begin{scope}[shift={(6,0)}]
		\foreach \i in {1,...,8}
		\node[dotted] (R\i) at (0.6*\i-3.5,0) {\i};

		\draw[thick,red] (R3.north) |- ($(R4.north)+(0,0.5)$)|-(R4.north)|- ($(R8.north)+(0,0.5)$)|-(R8.north);
		
		\draw[thick,red] (R1.north) |- ($(R2.north)+(0,0.5)$)|-(R2.north);
		\draw[thick,red] (R5.north) |- ($(R6.north)+(0,0.3)$)|-(R6.north)|- ($(R7.north)+(0,0.3)$)|-(R7.north);

		\node[below=8pt of R1.south east, anchor=west] 
		{$\rho=\pi(\sigma)=\{\{1,2\},\{3,4,8\},\{5,6,7\}\}$};
	\end{scope}
\end{tikzpicture}
\caption{Example of partition $\sigma\in \mathrm(8)$ with 4 blocks together with $\pi\in NC(4)$ and corresponding partition $\pi(\sigma)\in NC(8)$}
\label{fig:1}
\end{figure}
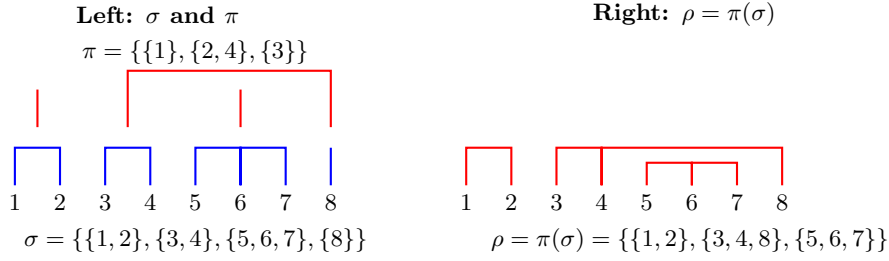

We denote by \( \mathrm{Int}_{\geq 2}(n) \) the set of interval partitions of \( \{1,\ldots,n\} \) in which every block contains at least two elements.  
Let \( \mathrm{NC}(k) \) denote the set of non-crossing partitions of \( \{1,\ldots,k\} \) such that \( 1 \) and \( k \) belong to the same block.  
The description of the free cumulants of \( x + i[x,s] \) is given in the following proposition.

\begin{prop}\label{prop:intro}
	Let \( x, s \) be free random variables, and suppose \( s \) is semicircular. Then
	\[
	\kappa_n(x+i[x,s])
	=\kappa_n(x)+ \sum_{k=1}^{\lfloor n/2 \rfloor} 
	\sum_{\substack{\sigma \in \mathrm{Int}_{\geq 2}(n) \\ |\sigma|=k,\, 1 \in V_1}}
	\sum_{\pi \in \mathrm{NC}(k)} 
	|V_1|\, \kappa_{\pi(\sigma)}(x).
	\]
\end{prop}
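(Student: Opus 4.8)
The plan is to expand the single-variable cumulant into cumulants of the individual letters $x$ and $s$, and then to exploit the vanishing rules forced by freeness and by semicircularity, with the whole difficulty concentrated in a signed cancellation. First I would use multilinearity of the free cumulants together with the splitting $i[x,s]=i\,xs-i\,sx$ to write
\[
\kappa_n(x+i[x,s])=\sum_{\varepsilon\colon\{1,\dots,n\}\to\{X,P,M\}}c(\varepsilon)\,\kappa_n\big(A^{\varepsilon_1},\dots,A^{\varepsilon_n}\big),
\]
where $A^{X}=x$, $A^{P}=xs$, $A^{M}=sx$, and $c(X)=1,\ c(P)=i,\ c(M)=-i$. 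Thus each position (``slot'') carries exactly one factor $x$, and a commutator slot (type $P$ or $M$) carries in addition one factor $s$. Applying the Krawczyk--Speicher formula for free cumulants with products as entries turns every summand into $\sum_{\tau}\kappa_\tau$ over non-crossing partitions $\tau$ of the underlying letters whose join with the interval partition grouping letters by slot is the single-block partition.

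Second, I would impose the two vanishing mechanisms. Freeness of $x$ and $s$ kills every $\tau$ having a mixed block, so each surviving block is monochromatic; and since $s$ is centered semicircular, each monochromatic $s$-block must be a pair, contributing a factor $\kappa_2(s)=1$. Hence a surviving $\tau$ consists of a non-crossing partition $\rho$ of the $n$ copies of $x$ (one per slot) together with a non-crossing matching of the $s$'s (one per commutator slot); in particular the number of commutator slots is even, which produces the range $1\le k\le\lfloor n/2\rfloor$. Each surviving term equals $c(\varepsilon)\,\kappa_\rho(x)$ up to powers of $\kappa_2(s)=1$.

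The heart of the proof is the summation of the signs $c(\varepsilon)$. Compatibility of an $s$-pair with the non-crossing constraint leaves only two local shapes: an \emph{internal} pair, whose two $s$'s fall in a common gap between consecutive $x$'s and which forces the two slots to be adjacent with labels $P,M$; and a \emph{surrounding} pair, whose two $s$'s straddle an entire $x$-block and which forces the outermost slots of that block to carry labels $M,\dots,P$. Summing $c(\varepsilon)$ over the remaining free $P/M$ choices, all configurations with an unbalanced $P,P$ or $M,M$ adjacency cancel, and with them disappear exactly the terms in which some $x$ sits in a singleton block (the $\kappa_1(x)$--contributions); this is why the final answer involves only cumulants of order $\ge 2$. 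What survives is a rigid, nested system of internal and surrounding pairs. Reading off the maximal runs of consecutive slots tied together by such a nested system yields an interval partition $\sigma\in\mathrm{Int}_{\ge 2}(n)$ (each run has at least two slots), the way these runs are amalgamated into the blocks of $\rho$ is recorded by $\pi\in\mathrm{NC}(k)$ with $\pi(\sigma)=\rho$, and the weight $|V_1|$ is the size of the outermost (interval) block attached to the index $1$, counting the rotational freedom in choosing which pair is the surrounding one in the run containing $1$. As a sanity check these rules give $\kappa_2=3\,\kappa_2(x)$ and $\kappa_3=4\,\kappa_3(x)$.

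Finally I would reassemble the surviving contributions. The all-$X$ assignment gives the isolated term $\kappa_n(x)$, and the fact that $\kappa_n(x+i[x,s])$ is a single connected cumulant forces the block of $\rho$ through position $1$ to contain position $n$ as well, which on the level of $\pi$ is precisely the condition that $1$ and $k$ lie in the same block. Collecting the weighted terms gives the stated identity. I expect the main obstacle to be this third step: proving simultaneously that only the nested internal/surrounding configurations survive the signed sum and that their signed multiplicity over the outer block equals $|V_1|$, while keeping the non-crossing bookkeeping coherent across the letter level, the slot level, and the block level.
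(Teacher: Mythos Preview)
Your outline follows the same overall route as the paper: expand by multilinearity, apply the products--as--entries formula, kill mixed blocks by freeness, force $s$-blocks to be pairs by semicircularity, and then control the remaining signed sum. The difference lies entirely in the part you yourself flag as the obstacle, and there your proposal has two genuine gaps.

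First, the cancellation. You assert that ``all configurations with an unbalanced $P,P$ or $M,M$ adjacency cancel,'' but you do not say what is paired with what. This is exactly the content of the argument: the paper organises the possible $s$-pairs $(j,l)$ into five types according to how the block of the join $\pi_{(j)}\vee\tau_B$ containing $j$ (resp.\ $l$) sits inside $[n+k]$, and then constructs explicit sign-reversing bijections $\Phi_2:\mathcal A_2\to\mathcal A_4$ and $\Phi_3:\mathcal A_3\to\mathcal A_5$ at the level of triples $(\pi,B,D)$. These bijections act by relocating a single $s$-pair (sliding the letters in the associated interval by two and swapping one $P$ for one $M$), which flips $(-1)^{|D|}$ while keeping $|B|$ and the value of $\kappa_\tau$ intact. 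Without such a pairing you cannot conclude that the signed sum vanishes, and ``$P,P$ adjacency'' alone is not the right invariant: the bijection matches a type~2 block of the form $(j,j+1)$ with a type~4 block of the form $(u,j{+}1)$, not another adjacent pair.

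Second, your description of the surviving configurations is not quite right. You say the surviving $s$-pairs are ``internal'' or ``surrounding'', the latter straddling an arbitrary $x$-block of $\rho$, arranged in a ``nested system''. In fact, after the bijective cancellation only the \emph{type~1} triples remain, and in those every $s$-pair consists of two cyclically adjacent letters: either the pair $(j,j{+}1)$ sitting in a gap between two interval blocks of $\sigma$, or the single cyclic pair $(1,n{+}2k)$. There is no nesting of ``surrounding'' pairs, and no $s$-pair straddles an inner block of $\rho$. Correspondingly, the factor $|V_1|$ is not a count of nested choices but simply the number of cyclic shifts of the canonical word $(s,\underbrace{x,\dots,x}_{|V_1|},s,s,\dots,s)$ that still parse as a product of slots; these are exactly the $|V_1|$ positions where the cyclic break can fall inside the first interval.

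So the approach is right, but the substance of the proof---the five-type classification and the two sign-reversing bijections---is missing, and the picture of the surviving terms should be corrected before you attempt to fill it in.
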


Under the additional assumption that \( x \) is a compound free Poisson element, we construct an operator model whose moments coincide with the free cumulants of \( x + i[x,s] \).  
Finally, using the fact that compound free Poisson elements are dense among FID distributions, we obtain our second main result.

\begin{thm}\label{thm:main2}
	Let \( x \) and \( s \) be free, with \( x \) freely infinitely divisible and \( s \) semicircular. Then the distribution of \( x + i[x,s] \) is freely infinitely divisible.
\end{thm}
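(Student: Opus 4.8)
The plan is to establish Theorem~\ref{thm:main2} in two stages: first prove it when $x$ is a compound free Poisson element, and then pass to an arbitrary freely infinitely divisible $x$ by approximation. The reduction is legitimate because the compound free Poisson laws are weakly dense among all FID laws, and the class of FID distributions is closed under weak convergence.

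The core of the argument is the compound free Poisson case, where I would use the cumulant characterization of such elements: a compactly supported law is compound free Poisson with rate $1$ and jump distribution $\nu$ exactly when its free cumulant sequence is the moment sequence of $\nu$, that is $\kappa_m=\int t^m\,d\nu(t)$. Hence, to show that $x+i[x,s]$ is FID it is enough to produce a self-adjoint element $T$ in a tracial $W^*$-probability space whose moments reproduce the free cumulants computed in Proposition~\ref{prop:intro}, namely
\[
\tau(T^n)=\kappa_n\bigl(x+i[x,s]\bigr)\qquad\text{for all } n\geq 1.
\]
Once such a $T$ is available, the free cumulants of $x+i[x,s]$ form a moment sequence, so $x+i[x,s]$ is compound free Poisson with jump law equal to the distribution of $T$, and therefore FID.

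To construct $T$ I would work on a full Fock space and assemble it from creation, annihilation and gauge operators adapted to the combinatorial data of Proposition~\ref{prop:intro}. In the compound free Poisson case one has $\kappa_{\pi(\sigma)}(x)=\prod_{Z}\kappa_{|Z|}(x)=\prod_{Z} m_{|Z|}(\nu)$, a product over the blocks $Z$ of $\pi(\sigma)$; this is precisely the shape of a Fock-space moment expansion $\tau(T^n)=\sum_{\rho\in\mathrm{NC}(n)}\kappa_\rho(T)$. The operator $T$ must therefore be designed so that the only surviving non-crossing partitions $\rho$ are those of the form $\pi(\sigma)$ with $\sigma\in\mathrm{Int}_{\geq 2}(n)$ and $\pi\in\mathrm{NC}(k)$ having $1$ and $k$ in one block, each contributing with multiplicity $|V_1|$. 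The block-size weight $|V_1|$ and the requirement that $1$ and $k$ share a block are the signature of a cyclic, positional degree of freedom, which I would encode by letting a distinguished letter range over the positions of that block.

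The step I expect to be the main obstacle is exactly this verification: proving that the vacuum moments of the hand-built $T$ match the triple sum of Proposition~\ref{prop:intro} term by term, with the correct multiplicity $|V_1|$ and the interval/blocks-of-size-at-least-two restriction built in. This amounts to a careful bijection between the non-crossing structures enumerated in the Fock-space computation and the indexing pairs $(\sigma,\pi)$, together with a matching of weights. The concluding limit is then routine: since Proposition~\ref{prop:intro} writes each $\kappa_n(x+i[x,s])$ as a fixed polynomial in $\kappa_1(x),\dots,\kappa_n(x)$, approximating a general FID law by compound free Poisson laws $\mu_{x_k}$ with $\kappa_j(x_k)\to\kappa_j(x)$ forces $\kappa_n(x_k+i[x_k,s])\to\kappa_n(x+i[x,s])$ for every $n$; the elements $x_k+i[x_k,s]$ are FID by the first stage, and weak closedness of the FID class promotes the limit $x+i[x,s]$ to an FID element.
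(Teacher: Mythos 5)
Your overall strategy is exactly the paper's: reduce to the compound free Poisson case using density of such laws among FID distributions (together with the fact that Proposition~\ref{prop:intro} expresses each $\kappa_n(x+i[x,s])$ polynomially in the cumulants of $x$, so the cumulants pass to the limit), and in that case realize the cumulant sequence of $x+i[x,s]$ as a moment sequence via a Fock-type operator model. However, your proposal stops precisely where the proof begins: you never define the operator model, and you yourself flag the verification that its vacuum moments reproduce the triple sum of Proposition~\ref{prop:intro} as ``the main obstacle.'' That verification, together with the explicit definition of the operators, is the entire content of the paper's Section~5: the paper works on $\mathcal{F}(\mathcal{H})=\bigoplus_{n\geq 1}\mathcal{H}^{\otimes n}$, where $\mathcal{H}=L^2(\Omega,\mathcal{G},\mathbb{P})$ contains a variable $Y$ whose moments are the free cumulants of $x$, and defines six concrete operators $\widehat{x},\widehat{xs},\widehat{sx},\widetilde{x},\widetilde{xs},\widetilde{sx}$ with parity constraints (each acts nontrivially only on tensors of odd or of even length) that force exactly the interval-partition structure $\mathrm{Int}_{\geq 2}(n)$ and the condition that $1$ and $k$ lie in one block; the multiplicity $|V_1|$ and the four possible relative positions of consecutive blocks in a non-crossing partition are matched term by term with the four products $\widehat{xs}^u\widehat{sx}^u$, $\widehat{xs}^u\widehat{sx}^d$, $\widehat{xs}^d\widehat{sx}^u$, $\widehat{xs}^d\widehat{sx}^d$. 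Without this construction the proposal is a plan, not a proof.

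There is also a structural point where your plan, as stated, aims at something stronger than what is needed, and possibly stronger than what is true. You ask for a single self-adjoint $T$ in a tracial probability space with $\tau(T^n)=\kappa_n(x+i[x,s])$, i.e., a rate-one compound free Poisson representation. The paper instead writes the cumulant sequence as the \emph{sum of two} vacuum moment sequences,
\[
\kappa_n(x+i[x,s])=\bigl\langle(\widehat{x}+\widehat{xs}+\widehat{sx})^n 1,1\bigr\rangle+\bigl\langle(\widetilde{x}+\widetilde{xs}+\widetilde{sx})^n 1,1\bigr\rangle ,
\]
where one family accounts for the cumulant words that do not begin and end with $s$ and the other for those that do. This exhibits the cumulants as the moments of a positive measure of total mass $2$ (a rate-two compound free Poisson representation), which is all that free infinite divisibility requires. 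Requiring moments of a \emph{probability} measure is genuinely more restrictive: a moment sequence of a mass-two measure need not be a probability moment sequence (for instance $(0,2,0,2,\dots)$ violates $m_4\geq m_2^2$), and nothing in your outline shows such a normalization is attainable here. So when carrying out the construction you should allow an arbitrary finite positive measure --- equivalently a sum of vacuum expectations, as the paper does --- rather than insisting on a single tracial operator $T$.
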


Besides this introduction this paper contains 4 more sections.  
Section~2 provides the necessary background and fixes notation.  
In Section~3 we prove Theorem~\ref{thm:main1}.  
Section~4 is devoted to the study of the element \( x + i[x,s] \) and contains the proof of Proposition~\ref{prop:intro}.  
Finally, in Section~5 we construct an operator model, in the spirit of \cite{mvpvv}, \cite{mvp-monot}, showing that the free cumulants of \( x + i[x,s] \) correspond to moments of a positive measure, thereby establishing Theorem~\ref{thm:main2}.

\section{Preliminaries}\label{section:1}

For the whole paper we fix a non--commutative probability space $(\cA,\tau)$ such that $\cA$ is a unital $C^*$--algebra and $\tau$ is positive, faithful, tracial linear functional, such that $\tau(1_{\cA})=1$.

\begin{notation}
	If $ \pi \in NC(n)$ and $ A \subseteq [n] $, we shall write
	 $ A \trianglelefteq \pi $ 
	 if $ A $ is a union of blocks of $ \pi $.
\end{notation}

 	Remark that if 
 	$ A \trianglelefteq \pi $, then also
 	$ [n]\setminus A \trianglelefteq \pi$.
 	Furthermore, if $(i_1, i_2, \dots, i_l)$ is a block of $\pi $ with $ i_1 < i_2 < \dots , i_l $, then $ \{ i_1+1, i_1 +2, \dots, i_l \} \trianglelefteq \pi $
 	and
 	$ \{ i_{q}+1, i_{q}+ 2, \dots, i_{q+1}- 1\} \trianglelefteq \pi $
 	for each 
 	$ q \in \{ 1, 2, \dots, l-1\}$.
 	
%
%
%
%
  \begin{notation}\label{notation:23}
 	Let $\{B_1, B_2, \dots, B_l\}$ be a partition of the set $[n] = \{1, 2, \dots, n\}$.
 	For each $i \in [l]$, write
 	\[
 	B_i = \{ b_{i,1} < b_{i,2} < \dots < b_{i,q(i)} \},
 	\]
 	and let $m(1), \dots, m(l)$ be fixed positive integers.
 	$\ $
 	We introduce the shorthand notation
 	\[
 	\big( (X_{1,1}, X_{1,2}, \dots, X_{1,m(1)})_{|B_1}, \dots, (X_{l,1}, X_{l,2}, \dots, X_{l,m(l)})_{|B_l} \big)
 	\]
 	to denote the ordered $n$-tuple
 	\[
 	(x_1, x_2, \dots, x_n),
 	\]
 	where each component $x_j$ is determined as follows:
 	for $j = b_{u,v} \in B_u$, we set
 	\[
 	x_j = X_{u,s} \quad \text{with} \quad s \equiv v \pmod{m(u)}.
 	\]
 	In other words, within each block $B_u$, the variables
 	$X_{u,1}, \dots, X_{u,m(u)}$
 	are assigned cyclically according to the order of indices in $B_u$.
 	$\ $
 	If $m(u) = 1$ for some $u \in [l]$, we adopt the simplified notation
 	\[
 	X_{u,1\,|B_u}
 	\quad \text{instead of} \quad
 	(X_{u,1})_{|B_u}.
 	\]
 \end{notation}

 For example
 \begin{align*}
& \big( (X, Y, Z)_{| \{2, 3, 4, 5\}}, (T, V)_{|\{1, 6, 7 \} } \big) 
= \big( T, X, Y, Z, X, V, T \big) \\
& \big( (X, Y)_{| \{ 1, 2, 3, 4, 7\}}, Z_{|\{5, 6\}} \big) = 
 \big( X, Y, X, Y, Z, Z, X\big).
 \end{align*}
 
%
%
%
 
 \begin{notation}
 Let $A \subset [n] = \{1, 2, \dots, n\}$ with
 $A = \{a_1 < a_2 < \dots < a_k\}$ and $1 \le k < n$.
 
 \smallskip
 \noindent
 (1) The map $\varphi_A$.
 Define a surjective map
 \[
 \varphi_A : [n+k] \to [n]
 \]
 by
 \[
 \varphi_A(s) = s - |\{t \in A : t < \varphi_A(s)\}|,
 \]
 that is, $\varphi_A$ collapses each pair of consecutive integers
 corresponding to elements of $A$ back to a single index in $[n]$. 
 
 \smallskip
 \noindent
 (2) The inverse image map $\iota_A$.
 For any subset $B \subset [n]$, define
 \[
 \iota_A(B) := \varphi_A^{-1}(B) \subset [n+k].
 \]
 Thus, $\iota_A$ “expands’’ a subset of $[n]$ into the corresponding subset of $[n+k]$, duplicating each element in $A$.
 
 \smallskip
 \noindent
 (3) The partition $\tau_A$.
 Let $\tau_A$ denote the interval partition of $[n+k]$ defined as follows:
 for each $j \in [n]$,
 \[
 B_j =
 \begin{cases}
 	\{\, \varphi_A^{-1}(j) \,\} = \{t\}, & \text{if } j \notin A,\\[3pt]
 	\{t, t+1\}, & \text{if } j \in A,
 \end{cases}
 \]
 where $t$ is the smallest element in $\varphi_A^{-1}(j)$.
 In other words, each $j \in A$ corresponds to a block of two consecutive elements, and each $j \notin A$ corresponds to a singleton block:
 \[
 \tau_A = \{B_1, B_2, \dots, B_n\}.
 \]
 
 \smallskip
 \noindent
 Example.
 If $n=5$ and $A = \{2, 4\}$, then
 \[
 \tau_A = \{(1), (2,3), (4), (5,6), (7)\},
 \quad
 \iota_A(\{1,3,4,5\}) = \{1,4,5,6,7\}.
 \]
 \end{notation}

The following result (Theorem 11.12 from \cite{nisp}) will be extensively used in the
following sections.
\begin{thm}\label{thm:product-entries}
Consider the non-commutative probability space $(\mathcal{A}, \tau)$ and let
$(\kappa_\pi)_{\pi \in NC} $ be the correspondent free cumulants. Let $m $ be a positive integer and let $\sigma $ be an interval partition of $[n]$ with $m$ blocks, 
\[ \sigma = \big(1, \dots, i(1)\big), \big(i(1)+1, \dots, i(2)\big), 
\dots, 
\big(i(m-1)+1, \dots, n\big).
\]
For any variables $ a_1, a_2, \dots, a_n \in \mathcal{A}$  we have that
\[ 
\kappa_m \big( a_1\cdots a_{i(1)}, a_{i(1)+1} \cdots a_{i(2)}, \dots, a_{i(m-1)+1} \cdots a_n \big)
= 
\sum_{\substack{\pi\in NC(n)\\ \pi \vee \sigma = \mathbbm{1}_n}}
\kappa_{\pi}\big[a_1,a_2, \dots, a_n\big]
\]

\end{thm}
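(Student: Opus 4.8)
The plan is to establish the identity by Möbius inversion on the lattice $NC(m)$, reducing it to the defining property of the Möbius function together with the fact that, for an interval partition $\sigma$, the inflation map $\omega \mapsto \omega(\sigma)$ is a lattice isomorphism $NC(m) \cong [\sigma,\mathbbm{1}_n]$. Set $A_j = a_{i(j-1)+1}\cdots a_{i(j)}$ (with $i(0)=0$), so the left-hand side is $\kappa_m(A_1,\dots,A_m)$. For $\omega\in NC(m)$ write $\tau_\omega[A_1,\dots,A_m]:=\prod_{W\in\omega}\tau\big(\prod_{j\in W}A_j\big)$ for the multiplicative extension of the state, each block-product taken in increasing order. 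Applying Möbius inversion to the moment–cumulant relation at the top partition $\mathbbm{1}_m$ gives
\[
\kappa_m(A_1,\dots,A_m)=\sum_{\omega\in NC(m)}\mu(\omega,\mathbbm{1}_m)\,\tau_\omega[A_1,\dots,A_m],
\]
where $\mu$ denotes the Möbius function of $NC(m)$.

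First I would use that $\sigma$ is an interval partition, whose blocks are consecutive and ordered, to identify the moment factors with inflated ones: for every $\omega\in NC(m)$,
\[
\tau_\omega[A_1,\dots,A_m]=\tau_{\omega(\sigma)}[a_1,\dots,a_n].
\]
Indeed, a block $W=\{j_1<\dots<j_r\}$ of $\omega$ contributes $\tau(A_{j_1}\cdots A_{j_r})$, and the union of the associated $\sigma$-blocks, listed in increasing order, is exactly the block $\bigcup_{j\in W}W_j$ of $\omega(\sigma)$, whose ordered entries spell out $A_{j_1}\cdots A_{j_r}$. Expanding each right-hand side by the moment–cumulant relation $\tau_{\omega(\sigma)}[a_1,\dots,a_n]=\sum_{\rho\le\omega(\sigma)}\kappa_\rho[a_1,\dots,a_n]$ and interchanging the order of summation yields
\[
\kappa_m(A_1,\dots,A_m)=\sum_{\rho\in NC(n)}\kappa_\rho[a_1,\dots,a_n]\Bigg(\sum_{\substack{\omega\in NC(m)\\\omega(\sigma)\ge\rho}}\mu(\omega,\mathbbm{1}_m)\Bigg).
\]

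The crux is to evaluate the inner sum. Since $\omega(\sigma)\ge\sigma$ for all $\omega$, the condition $\omega(\sigma)\ge\rho$ is equivalent to $\omega(\sigma)\ge\rho\vee\sigma$. As $\sigma$ is an interval partition, inflation is an order isomorphism of $NC(m)$ onto $[\sigma,\mathbbm{1}_n]$; let $\omega_0\in NC(m)$ be the unique preimage of $\rho\vee\sigma$. Then $\omega(\sigma)\ge\rho\vee\sigma$ if and only if $\omega\ge\omega_0$, so the inner sum equals $\sum_{\omega\ge\omega_0}\mu(\omega,\mathbbm{1}_m)$, which by the defining relation of the Möbius function is $1$ when $\omega_0=\mathbbm{1}_m$ and $0$ otherwise. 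Finally $\omega_0=\mathbbm{1}_m$ precisely when $\rho\vee\sigma=\omega_0(\sigma)=\mathbbm{1}_n$, so only the partitions $\rho$ with $\rho\vee\sigma=\mathbbm{1}_n$ survive, which is exactly the asserted formula. I expect the main obstacle to be the lattice isomorphism $NC(m)\cong[\sigma,\mathbbm{1}_n]$ and the bookkeeping that recasts $\{\omega:\omega(\sigma)\ge\rho\}$ as the interval $[\omega_0,\mathbbm{1}_m]$; once these are in place the Möbius cancellation is automatic.
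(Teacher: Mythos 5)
Your proof is correct. Note that the paper does not prove this statement itself — it quotes it as Theorem 11.12 of Nica–Speicher \cite{nisp} — and your argument is essentially the standard proof found there: Möbius inversion of the moment–cumulant relation at $\mathbbm{1}_m$, the identification $\tau_\omega[A_1,\dots,A_m]=\tau_{\omega(\sigma)}[a_1,\dots,a_n]$ (valid precisely because $\sigma$ is an interval partition), and the fact that inflation by $\sigma$ is a poset isomorphism of $NC(m)$ onto the interval $[\sigma,\mathbbm{1}_n]$ in $NC(n)$, so the inner Möbius sum collapses to the indicator of $\rho\vee\sigma=\mathbbm{1}_n$. The only point worth flagging is that the join $\rho\vee\sigma$ throughout must be the join in $NC(n)$ rather than in the full partition lattice — this is what the theorem's hypothesis means and what your equivalence $\omega(\sigma)\ge\rho \iff \omega(\sigma)\ge\rho\vee\sigma$ uses, so your argument is consistent on this point.
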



\section{Free convolution without freeness $s+i[s,x]$}\label{Section:3}

\begin{lemma}\label{lem:31}
 If $ s $ and $ x $ are free random variables such that $s $ is semicircular, then $ s $ and $ i[s, x] $ are \textbf{not} free.
\end{lemma}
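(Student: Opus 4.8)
The plan is to use the cumulant criterion for freeness: $s$ and $b:=i[s,x]$ are free if and only if every \emph{mixed} free cumulant, i.e. every $\kappa_n(c_1,\dots,c_n)$ with $c_j\in\{s,b\}$ and the $c_j$ not all equal, vanishes. Hence it suffices to exhibit a single nonvanishing mixed cumulant. I would write $b=i(sx-xs)$ and analyze these mixed cumulants by multilinearity together with the product formula of Theorem~\ref{thm:product-entries}.

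First I would record a parity obstruction. Expanding a mixed cumulant of $p$ copies of $s$ and $q$ copies of $b$ and applying Theorem~\ref{thm:product-entries}, every factor is either an $s$ or an $x$; since $s$ is semicircular only those non-crossing partitions survive whose $s$-blocks are pairs, while freeness of $s$ and $x$ forces every block to be monochromatic. Each $s$ contributes one $s$-factor and each $b$ one $s$-factor and one $x$-factor, so the total number of $s$-factors is $p+q=n$. As this must be even, all mixed cumulants of odd order vanish; a direct short computation shows $\kappa_2(s,b)=0$ as well (consistent with the additivity of second cumulants behind Theorem~\ref{thm:main1}). The first possible order is therefore $n=4$.

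Next I would compute $\kappa_4(s,b,s,b)$. Expanding $b$ twice gives four terms $\kappa_4(s,\alpha,s,\beta)$ with $\alpha,\beta\in\{sx,xs\}$, each a cumulant with six factors to which Theorem~\ref{thm:product-entries} applies with the interval partition $\sigma=\{\{1\},\{2,3\},\{4\},\{5,6\}\}$. For each term I would enumerate the non-crossing partitions $\pi$ that (i) are monochromatic, (ii) pair the four $s$-factors, and (iii) satisfy $\pi\vee\sigma=\mathbbm{1}_6$; the surviving terms evaluate to products of $\kappa_2(s,s)=\tau(s^2)$ with a cumulant of $x$. A finite case check leaves exactly one surviving partition in the $(sx,sx)$ and $(xs,xs)$ terms and none in the two mixed ones, yielding
\[
\kappa_4(s,b,s,b)=-2\,\tau(s^2)^2\,\kappa_2(x,x).
\]
Since $\kappa_2(x,x)=\tau(x^2)-\tau(x)^2=0$ would force $x$ to be a scalar (by faithfulness of $\tau$), hence $b=0$, for any non-degenerate $x$ this mixed cumulant is nonzero, so $s$ and $b$ are not free.

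The main obstacle is the bookkeeping in this last step: one must correctly combine the non-crossing and $\pi\vee\sigma=\mathbbm{1}_6$ constraints with the monochromatic-pairing conditions, and — most delicately — verify that the four sign-weighted terms coming from expanding $i(sx-xs)$ twice do not cancel but reinforce. I expect the sign tracking (the two factors of $i$ and the relative signs of the $sx$ and $xs$ contributions) to be the only place where an error could creep in; everything else is a finite, mechanical enumeration over $NC(6)$.
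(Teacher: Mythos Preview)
Your proposal is correct and follows essentially the same approach as the paper: exhibit a single nonvanishing order-4 mixed cumulant by expanding $b=i(sx-xs)$ and applying the products-as-entries formula. The only difference is cosmetic---you compute $\kappa_4(s,b,s,b)=-2\,\kappa_2(s)^2\kappa_2(x)$, whereas the paper computes $\kappa_4(s,b,b,s)=\kappa_2(s)^2\kappa_2(x)$; both are nonzero for non-constant $x$, and the enumeration over $NC(6)$ is of the same nature in each case.
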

 
\begin{proof}

It suffices to show that if 
 is even, then 
\begin{equation}\label{k:even}
\kappa_{4}\big(s, i[s, x], i[s, x], s \big) \neq 0. 
\end{equation} 

By linearity we have
\begin{align*}
	\kappa_{4}\big(s, i[s, x], i[s, x], s \big) =&-\kappa_4(s,sx,sx,s)+\kappa_4(s,sx,xs,s)\\ &+\kappa_4(s,xs,sx,s)-\kappa_4(s,xs,xs,s).
\end{align*} 
Observe that only second term in the sum above is non--zero. Indeed for $\kappa_4(s,sx,sx,s)$ consider which $s$ is paired with the last $s$, and observe that all possible pairing force the partition to violate the maximality property from Theorem \ref{thm:product-entries}. For $\kappa_4(s,sx,sx,s)$ observe that we can pair the first $s$ only with second or last $s$, otherwise we will separate second $s$ and $\kappa_1(s)=0$. But again joining first $s$ with second or last $s$ forces violation of maximality again. Similar argument applies to $\kappa_4(s,xs,xs,s)$.

For the second term similar analysis as above leads to
\begin{align*}
	\kappa_4(s,sx,xs,s)=\kappa_2(s)\kappa_2(x)\kappa_2(s)>0.
\end{align*} 

\end{proof}

\begin{thm}
	If $ s $ and $ x $ are free random variables such that $ s $ is semicircular, then the distribution of $ s + i[s, x] $ is the free additive convolution of the distributions of $ s $ and of $i[s, x]$.
\end{thm}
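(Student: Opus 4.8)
The plan is to verify the additive-convolution identity through the free cumulant linearization criterion: since the free additive convolution $\mu_s \boxplus \mu_{i[s,x]}$ is, by definition, the law whose free cumulants are $\kappa_n(s) + \kappa_n(i[s,x])$ (the linearization property of free cumulants, see \cite{nisp}), it suffices to prove
\[
\kappa_n\big(s + i[s,x]\big) = \kappa_n(s) + \kappa_n\big(i[s,x]\big) \qquad (n \geq 1).
\]
Writing $y = i[s,x]$ and using multilinearity of free cumulants, the left-hand side expands as $\sum_{(\epsilon_1,\dots,\epsilon_n)} \kappa_n(\epsilon_1, \dots, \epsilon_n)$ with each $\epsilon_j \in \{s, y\}$; the two pure tuples reproduce $\kappa_n(s)$ and $\kappa_n(y)$, so the identity is equivalent to the vanishing of the sum of all \emph{mixed} cumulants, namely those tuples containing at least one $s$ and at least one $y$. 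The whole argument thereby reduces to a single combinatorial cancellation.

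First I would record a parity reduction. Substituting $y = i(sx - xs)$ turns every tuple into a signed sum of cumulants whose entries are the letters $s$ and $x$, and I then apply Theorem~\ref{thm:product-entries} to pass to free cumulants in these individual letters. By freeness of $s$ and $x$, only non-crossing partitions whose blocks do not mix $s$ with $x$ survive, and since $s$ is semicircular only those in which the $s$-letters are matched in pairs contribute. The crucial bookkeeping observation is that every slot---whether carrying $s$ or $y$---contributes exactly one $s$-letter, so each resulting term carries exactly $n$ letters $s$. For odd $n$ these cannot be paired, so every term (pure or mixed) vanishes and both sides equal $0$; this disposes of the odd case at once. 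For even $n$ the surviving configurations are described by the choice of the ``commutator orderings'' $sx$ or $xs$ in the $y$-slots (each $xs$ carrying a sign $-1$), a non-crossing pairing of the $n$ letters $s$, and a compatible non-crossing partition of the $x$-letters, subject to the connectivity condition $\pi \vee \sigma = \mathbbm{1}$ of Theorem~\ref{thm:product-entries}.

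The heart of the proof, and the step I expect to be the main obstacle, is to show that the signed sum over all \emph{mixed} surviving configurations cancels. The mechanism is the opposition of signs between the two orderings $+sx$ and $-xs$: I would construct a sign-reversing involution on the set of mixed configurations that toggles a canonically chosen commutator ordering while leaving the pairing of the $s$-letters and the partition of the $x$-letters otherwise intact, so that paired configurations contribute with equal magnitude and opposite sign. The natural way to anchor such an involution is at a \emph{pure $s$-slot}, which exists precisely because the tuple is mixed, and to use it to single out a distinguished $y$-slot whose ordering can be flipped; the delicate points are checking that the flip preserves non-crossingness and the connectivity $\pi \vee \sigma = \mathbbm{1}$, and that it genuinely pairs distinct configurations with no fixed points. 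That the involution must be confined to mixed tuples is forced by Lemma~\ref{lem:31}: individual mixed cumulants such as $\kappa_4(s,y,y,s)$ need not vanish, so the cancellation is necessarily collective rather than term-by-term. As a consistency check, the case $n=2$ already exhibits the mechanism, since $\kappa_2(s,y) = i\big(\kappa_2(s, sx) - \kappa_2(s, xs)\big) = 0$ because both product cumulants equal $\kappa_2(s)\,\kappa_1(x)$.
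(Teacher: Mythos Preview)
Your strategy is exactly the paper's: reduce to the vanishing of the mixed cumulants, expand via Theorem~\ref{thm:product-entries}, and kill the surviving terms by a sign-reversing involution keyed to the presence of a pure-$s$ slot. The parity reduction is a correct (and pleasant) shortcut that the paper does not bother to isolate.

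There is, however, a real gap in your description of the involution. You propose to ``toggle a canonically chosen commutator ordering while leaving the pairing of the $s$-letters and the partition of the $x$-letters otherwise intact.'' Taken literally this cannot work: flipping $sx \to xs$ in a fixed $y$-slot swaps the positions of an $s$ and an $x$, so any block of $\pi$ that touched that slot now mixes the two letters and the term becomes zero rather than sign-reversed. What makes the involution go is a structural fact you have not stated: in every nonvanishing configuration with $\pi\vee\tau_B=\mathbbm{1}$, each pure-$s$ slot is paired in $\pi$ with the $s$-letter of an \emph{adjacent} commutator (the paper's property~(ii); the proof is a short case analysis on which side of the commutator the $s$-letter sits). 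Once you have this, the correct involution is not an in-place flip but a swap of the adjacent pair of \emph{slots}: the pattern $(s,\,sx)$ at consecutive positions is replaced by $(xs,\,s)$ (and $(xs,\,s)\mapsto(s,\,sx)$), so $B$ shifts by one, $|D|$ changes by one, and $\pi$ is carried along by the obvious relabeling. After this correction the $s$-pairing and the $x$-partition genuinely are preserved up to relabeling, and your argument closes.
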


 \begin{proof}
 
 We shall show that although in view of Lemma \ref{lem:31} the joint cumulants of $s$ and $i[s,x]$ do not vanish in general, for any positive integer $ n $, we still have
 \begin{equation*}
 \kappa_n\big( s + i[s, x] \big) = \kappa_n \big( i[s, x]\big) + \kappa_n \big(s\big).
 \end{equation*}	
Using Notation \ref{notation:23} it suffices to show that for each $ 1 \leq k < n $, 
\begin{equation}
\sum_{\substack{B \subset [n] \\ | B | = k}}
\kappa_n( [s, x]_{| B }, s_{| [n]\setminus B })=0.
\end{equation}
 From linearity of free cumulants the equation above is equivalent to
 \begin{equation}\label{kAB}
 \sum_{\substack{B \subset [n] \\ | B | = k}} 
 \sum_{D \subseteq B } 
 (-1)^{|D|}\kappa_n( sx_{| B \setminus D}, xs_{|D}, s_{| [n]\setminus B  })=0
 \end{equation}	
 which, from Theorem \ref{thm:product-entries}, reads
 \begin{equation}\label{eq:3s}
  \sum_{\substack{B \subset [n] \\ | B | = k}} 
 \sum_{D \subseteq B }
 \sum_{\substack{\pi \in NC(n+k) \\ \pi \vee \tau_B = \mathbbm{1}_{n+k} } } 
 (-1)^{| D|}
 \kappa_{\pi} \big[ 
 (s, x)_{ | \iota_B (B \setminus D)} , (x, s)_{| \iota_B (D)} , s_{| \iota_B ([n] \setminus B )}\big] = 0.
 \end{equation}
 
 Let 
 $\big(x_1, x_2, \dots, x_{n+k}\big) = 
 \big(
 (s, x)_{ | \iota_B (B \setminus D)} , (x, s)_{| \iota_B (D)} , s_{| \iota_B ([n] \setminus B )}\big) $

 Remark that either the summand in (\ref{eq:3s}) is zero or 
 that  $ \pi \vee \tau_B \neq \mathbbm{1}_{n+k}  $
unless $ \pi $ satisfies the following conditions:
\begin{enumerate}
	\item[(i)] \textit{If $ i $ and $j $ are in the same block of $ \pi $, then $ x_i = x_j $; furthermore, if $ x_i =s $, then $ (i, j) $ is a block of $\pi $}.
	\item[(ii)] \emph{If $ i $ and $ j $ are in the same block of $ \pi $ with $ i \in \iota_B([n]\setminus B ) $ 
	then $ j \in \iota_B(B)$, $ x_i = x_j= s $ and 
	$ i, j$ are consecutive mod. $n+k$ (i.e. $\{i, j\} = \{ n+k, 1\} $ or $ \{i, j\} \in \big\{ \{ t, t+1\}: \ 1 \leq t < n+k \big\} $)}.
\end{enumerate}

  Part (i) follows trivially from the free independence of $ s $ and $ x $ and from the properties of the semicircular distribution.
  
   For part (ii) suppose first that if 
  $ i, j \in \iota_B([n]\setminus B ) $
   are in the same block of
    $\pi $; 
    then
     $x_i = x_j = s $.
      Since $ s $ is semicircular,
       $ (i, j)$ 
       must be a block of $\pi$. But
  $ (i), (j)$
   are blocks of $ \tau_B $ so
   $(i, j)$ 
   is a block of 
   $ \pi \vee \tau_B $,
    hence
   $ \pi \vee \tau_B \neq \mathbbm{1}_{n+k}  $.
   
    Next, suppose that $ i, j $ are not consecutive, $ i \in \iota_B ([n]\setminus B ) $, $ j \in \iota_B (B) $ and $ i, j $ are in the same block of $ \pi $. 
    From (i), we have that $ x_i=x_j = s $ and $ (i, j) $ is a block of $ \pi $. 
    If $ i < j $ and 
    $ j \in \iota_B( D )$, then $(i)$ and $(j-1, j)$ are blocks of $ \tau_B $ and $ x_{j-1} = x $ (see the Figure \ref{fig:2}).
    \begin{center}
\begin{figure}[H]
	\begin{tikzpicture}[
	scale=1.2,
	baseline=(current bounding box.center),
	every node/.style={font=\small},
	var/.style={
		draw,
		circle,
		minimum size=4mm,
		inner sep=1pt,
		thick
	}
	]
	
	\coordinate (i) at (0,0);
	\coordinate (d1) at (0.8,0);
	\coordinate (d2) at (1.6,0);
	\coordinate (y) at (2.4,0);
	\coordinate (j) at (3.2,0);
	
	\node[var] (Si) at (i) {$s$};
	\node[var] (Y) at (y) {$x$};
	\node[var] (Sj) at (j) {$s$};
	
	\node[below=4pt] at (Si.south) {$i$};
	\node[below=4pt] at (Y.south) {$j\!-\!1$};
	\node[below=4pt] at (Sj.south) {$j$};
	
	\node at (1.2,0) {$\cdots$};
	
	\draw[thick] (Si.north) |- ($(Sj.north)+(0,0.8)$) -| (Sj.north);
	
\end{tikzpicture}
\caption{Configuration in which singleton $s$ on position $i$ is paired with $s$ on position $j$ coming from a pair $xs$.}
\label{fig:2}
\end{figure}
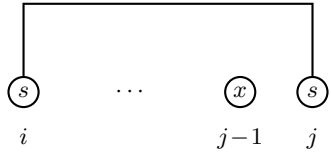

     \end{center}

    
    Since $ \pi $ is non-crossing, 
    $ \{i, i+1, \dots, j\} \trianglelefteq \pi$, and, since $i $ and $j $ are not consecutive mod. $n+k$, 
    $\{i, i+1, \dots, j\} \neq [n+k]$.  On the other hand, since $ \tau_B $ is an interval partition and $(i)$, $(j-1, j)$ are blocks of $ \tau_B $, we have that also
    $ \{i, i+1, \dots, j\} \trianglelefteq \tau_B$
    so the whole interval $ \{i, i+1, \dots, j\}$ is separated in $\pi\vee\tau_B$ and thus   $ \pi\vee \tau_B \neq \mathbbm{1}_{n+k}$. 
    The argument for the case $ i > j $ and 
    $ j \in \iota_B ( B \setminus D)$ is similar (just a mirror reflection of the diagram from figure  above).
    
    If $ i < j $ and $ j \in \iota_B (B \setminus D)$, then $(i)$ and $(j, j+1)$ are blocks of $ \tau_B $ and $ x_{j+1} = x $; furthermore, 
    $ \{ i+1, i+2, \dots, j-1\} \neq \emptyset$ (see the Figure \ref{fig:3}).
    \begin{center}
    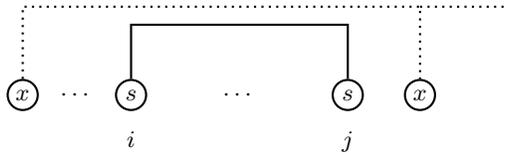
\begin{figure}[H]
    \begin{tikzpicture}[
    	scale=1.2,
    	baseline=(current bounding box.center),
    	every node/.style={font=\small},
    	var/.style={
    		draw,
    		circle,
    		minimum size=4mm,
    		inner sep=1pt,
    		thick
    	}
    	]
    	
    	\coordinate (y1) at (0,0);
    	\coordinate (s1) at (1.2,0);
    	\coordinate (d1) at (2,0);
    	\coordinate (d2) at (2.8,0);
    	\coordinate (s2) at (3.6,0);
    	\coordinate (y2) at (4.4,0);
    	
    	\node[var] (Y1) at (y1) {$x$};
    	\node[var] (S1) at (s1) {$s$};
    	\node[var] (S2) at (s2) {$s$};
    	\node[var] (Y2) at (y2) {$x$};
   
    	\node[below=4pt] at (S1.south) {$i$};
    	\node[below=4pt] at (S2.south) {$j$};
    	
    	\node at (2.4,0) {$\cdots$};
    	\node at (0.6,0) {$\cdots$};
    	\draw[thick] (S1.north) |- ($(S1.north)+(0,0.6)$) -| (S2.north);
    	\draw[thick,dotted] (Y1.north) |- ($(Y1.north)+(0,0.8)$) -| (Y2.north);
    		\draw[thick, dotted] 
    	($(Y2.north)+(0,0.8)$) -- ++(1.0,0);
    \end{tikzpicture}
    \caption{Configuration in which singleton $s$ on position $i$ is paired with $s$ on position $j$ coming from a pair $sx$ with $i<j$.}
    \label{fig:3}
    \end{figure}
\end{center}
    
    
    Since $ (i, j) $ is a block of $\pi $, we have that 
     $ \{ i+1, i+2, \dots, j-1\} \trianglelefteq \pi$. 
     On the other hand, using that $ \tau_B $ is an interval partition and $ (i)$, $(j, j+1)$ are blocks of $ \tau_B $, it follows that 
     $\{ i+1, i+2, \dots, j-1\} \trianglelefteq \tau_B$, so
     again the set $\{ i+1, i+2, \dots, j-1\}$ stays separated and we have $ \pi\vee \tau_B \neq \mathbbm{1}_{n+k}$. Finally, as above, the argument for the case 
     $ i > j $ and $ j \in \iota_B( D)$
     is similar (mirror reflection of the diagram ).
     
     This ends the proof that (ii) must be satisfied. Thus $s$ coming from singletons must be adjacent to $s$ coming from $sx$ or $xs$ and $\pi$ connects these two adjacent elements $s$.  Next we will show that partitions satisfying (i) and (ii) can be decomposed into pairs of partitions which cancel each other out. The idea of behind the rest of the proof is that in partition satisfying (i) and (ii) we can always replace the first pair $(s,sx)$ by $(xs,s)$ by adjusting the blocks containing those variables and keeping the same structure of the rest of the partition. This keeps all free cumulants unchanged while reversing the sign, as $sx$ comes with a plus sign and $xs$ comes with a minus sign.
     
     To simplify the notation, consider the set
     \begin{align*}
     \mathcal{A} = \big\{  (\pi, B, & D):\ \pi \in NC(n+k), \emptyset \neq D \subseteq B \subsetneq [n] \textrm{ such that } | B | = k,\\
     & \pi \vee\tau_B = \mathbbm{1}_{n+k} 
     \textrm{ and } \kappa_{\pi} \big[ 
     (s, x)_{ | \iota_B (B \setminus D)} , (x, s)_{| \iota_B (D)} , s_{| \iota_B ([n] \setminus B )}\big]
      \neq 0\big\}
     \end{align*}
 and, for $(\pi, B, D) \in \mathcal{A} $ we shall use the shorthand notation 
 \begin{align*}
 E(\pi, B, D)= (-1)^{| D|} \kappa_{\pi} \big[ 
 (s, x)_{ | \iota_B (B \setminus D)} , (x, s)_{| \iota_B (D)} , s_{| \iota_B ([n] \setminus B )}\big].
 \end{align*}
 
  We shall write the set $ \mathcal{A} $ as a disjoint union 
  $ \mathcal{A} = \mathcal{A}_1 \sqcup \mathcal{A}_2 $ 
  such that there exists a  bijection $\Phi:\mathcal{A}_1\rightarrow \mathcal{A}_2 $
  with the property that
  $ E \circ \Phi = - E  $.
  
  First, define $ \mathcal{A}_1 $ and $\mathcal{A}_2$ as follows. 
  Let $ (\pi, B, D) \in \mathcal{A} $; according to properties (i) and (ii), the blocks of $\pi $ that contain elements from $ \iota_B([n]\setminus B )$
  are  either $(1, n+k)$ or pairs of consecutive elements. 
  If $ (1, n+k)$ is such a block of $\pi $ then put 
   $ (\pi, B, D) \in \mathcal{A}_1 $ 
   if 
   $ 1 \in \iota_B([n]\setminus B )$,
   respectively put
    $ (\pi, B, D) \in \mathcal{A}_2 $ 
    if
   $ n+k \in \iota_B([n]\setminus B )$.
  If $\pi $ has only such blocks of consecutive elements, let $(i, i+1) $ be the first one in lexicographic order and put
   $ (\pi, B, D) \in \mathcal{A}_1 $ 
   if
    $ i \in \iota_B([n]\setminus B)$, 
    respectively put
   $ (\pi, B, D) \in \mathcal{A}_2 $
   if $ i+1 \in \iota_B([n]\setminus B)$.

  Next, we shall construct the bijection $\Phi$. Suppose first that 
  $(\pi, B, D)\in \mathcal{A}_1$
  is such that
  $(1, n+k)$ is a block of 
  $ \pi$
  and 
  $1 \in \iota_B([n]\setminus B)$. Property (ii) gives then that $ x_1 = x_{n+k}= s $ and that $ n+k \in \iota_B(B) $,
  therefore $ n \in B $ and $ x_{n+k-1}= x $, so $ n \in D $ (see Figure \ref{fig:4}).
  \begin{center}
  	{
  		\setlength{\intextsep}{-2pt}
\begin{figure}[H]
  \begin{tikzpicture}[
  	scale=0.9,
  	baseline=(current bounding box.center),
  	every node/.style={font=\small},
  	var/.style={
  		draw,
  		circle,
  		minimum size=5mm,
  		inner sep=1pt,
  		thick
  	},
  	>=Stealth
  	]
  	
  	\begin{scope}[xshift=0cm]
  		\coordinate (1) at (0,0);
  		\coordinate (y1) at (1.0,0);
  		\coordinate (y2) at (2.0,0);
  		\coordinate (dots) at (3.0,0);
  		\coordinate (yn) at (4.0,0);  
  		\coordinate (nk) at (5.0,0);
  		
  		\node[var] (S1) at (1) {$s$};
  		\node[var] (Y1) at (y1) {$x_1$};
  		\node[var] (Y2) at (y2) {$x_2$};
  		\node[var] (Y) at (yn) {$x$};
  		\node[var] (SNK) at (nk) {$s$};
  		
  		\node at (3.3,0) {$\cdots$};
  		
  		\node[below=4pt] at (S1.south) {$1$};
  		\node[below=4pt] at (Y.south) {$n{+}k{-}1$};
  		\node[below=4pt] at (SNK.south) {$n{+}k$};
  		
  		\draw[thick] (S1.north) |- ($(SNK.north)+(0,1.0)$) -| (SNK.north);
  		
  		\draw[thick,dotted] (Y1.north) |- ($(Y.north)+(0,0.6)$) -| (Y.north);
  		\draw[thick, dotted] (Y2.north) |- ($(Y2.north)+(0,0.6)$);
  		\draw[decorate, decoration={brace, amplitude=10pt, mirror}]
  		($(y1.south west)-(0.3,0.3)$) -- ($(dots.south east)+ (+0.5,-0.3)$)
  		node[midway, below=9pt] {$\rho$};
  	  	\end{scope}

  	\draw [-{Stealth[length=3mm, width=2mm]}] (6,0.5) -- (7,0.5);

  	\begin{scope}[xshift=8cm]
  		\coordinate (1r) at (0,0);
  		\coordinate (yr) at (1.0,0);   
  		\coordinate (y1r) at (2.0,0);
  		\coordinate (y2r) at (3.0,0);
  		\coordinate (dotsr) at (4.0,0);
  		\coordinate (snkr) at (5.0,0);
  		
  		\node[var] (S1r) at (1r) {$s$};
  		\node[var] (Yr) at (yr) {$x$};
  		\node[var] (Y1r) at (y1r) {$x_1$};
  		\node[var] (Y2r) at (y2r) {$x_2$};
  		\node[var] (SNKr) at (snkr) {$s$};
  		
  		\node at (4.3,0) {$\cdots$};
  		
  		\node[below=4pt] at (S1r.south) {$1$};
  		\node[below=4pt] at (Yr.south) {$2$};
  		\node[below=4pt] at (SNKr.south) {$n{+}k$};
  		
  		\draw[thick] (S1r.north) |- ($(SNKr.north)+(0,1.0)$) -| (SNKr.north);
  		
  		\draw[thick,dotted] (Y1r.north) |- ($(Y1r.north)+(0,0.6)$);
  		\draw[thick, dotted] (Y2r.north) |- ($(Y2r.north)+(0,0.6)$) -| (Yr.north);
  			\draw[decorate, decoration={brace, amplitude=10pt, mirror}]
  		($(y1r.south west)-(0.3,0.3)$) -- ($(dotsr.south east)+ (+0.5,-0.3)$)
  		node[midway, below=9pt] {$\rho$};
  	\end{scope}
  	
  \end{tikzpicture}
  \caption{Graphical representation of the bijection $\Phi$.}
  \label{fig:4}
  \end{figure}
}
  \end{center}
    We define
   $ \Phi(\pi, B, D) = (\pi^\prime, B^\prime, D^\prime)$
   where
   $ B^\prime = ( B \setminus \{n\} ) \cup \{1\}$,
   $ D^\prime = D \setminus \{n\} $
   and
   $ \pi^\prime  = (1, n+k) \cup \gamma \circ \pi_{| \{2, 3, \dots, n+k-1\}} \circ \gamma^{-1}$ 
   for
    $ \gamma = (2, 3, \dots, n+k-1)$ (see the figure above).
    With these notations, we have that
    $ \pi^\prime $ is non-crossing (see Property ...),
    that
     $ | D^\prime | = | D | + 1 $ and, if
    \begin{align*}
    \big( y_1, y_2, \dots, y_{n+k}\big)
    = & \big(
    (s, x)_{|\iota_{B^\prime}(B^\prime \setminus D^\prime)},
    (x, s)_{| \iota_{B^\prime}(D^\prime)}, s_{|\iota_{B^\prime} ([n]\setminus B^\prime)}
    \big)\\
     \big( x_1, x_2, \dots, x_{n+k}\big)
    = & \big(
    (s, x)_{|\iota_{B}(B \setminus D)},
    (x, s)_{| \iota_{B}(D)}, s_{|\iota_{B} ([n]\setminus B)}
    \big)
    \end{align*} 
   then
   $x_1 = x_{n+k} =s= y_1 = y_{n+k} $, 
   while
   $ x_u = y_{\gamma(u)} $ for $ 1 < u < n+k $,
   which give that
    $ E (\pi, B, D) = - E(\pi^\prime, B^\prime, D^\prime)$.
  
  Finally, suppose that
   $ (\pi, B, D) \in \mathcal{A}_1 $
    is such that $(1, n+k)$ is not a block of $\pi$ containing elements from 
    $ \iota_B([n]\setminus B)$
     and let $(i, i+1) $ be the first (in lexicographic order) block of $ \pi$ containing elements of
   $ \iota_B ([n]\setminus B )$. From the definition of $ \mathcal{A}_1 $, we have that 
   $ i \in \iota_B ([n]\setminus B) $, 
   so $ x_i = x_{i+1}= s $, $ x_{i+2} = x $, and
   $ (i,i+1) = \iota_B (\{j\}) $, for some $j \in B \setminus D $.
   \begin{center}
   	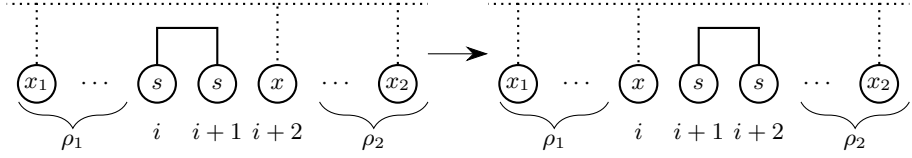
\begin{figure}[H]
  \begin{tikzpicture}[
  	scale=0.8,
  	baseline=(current bounding box.center),
  	every node/.style={font=\small},
  	var/.style={
  		draw,
  		circle,
  		minimum size=5mm,
  		inner sep=1pt,
  		thick
  	},
  	>=Stealth
  	]
  	
  	\begin{scope}[xshift=0cm]
  		\coordinate (y1) at (0,0);
  		\coordinate (dots1) at (1.0,0);
  		\coordinate (s1) at (2.0,0);
  		\coordinate (s2) at (3.0,0);
  		\coordinate (y) at (4.0,0);
  		 \coordinate (dots2) at (5.0,0);
  		\coordinate (y2) at (6.0,0);
  		
  		\node[var] (Y1) at (y1) {$x_1$};
  		\node[var] (S1) at (s1) {$s$};
  		\node[var] (S2) at (s2) {$s$};
  		\node[var] (Y) at (y) {$x$};
  		\node[var] (Y2) at (y2) {$x_2$};
  		
  		\node at (1.0,0) {$\cdots$};
  		\node at (5.0,0) {$\cdots$};
  		\node[below=4pt] at (S1.south) {$i$};
  		\node[below=4pt] at (S2.south) {$i+1$};
  		\node[below=4pt] at (Y.south) {$i+2$};
  		
  		\draw[thick] (S1.north) |- ($(S2.north)+(0,0.6)$) -| (S2.north);
  		
  		\draw[thick,dotted] (Y1.north) |- ($(Y.north)+(0,1)$) -| (Y.north);
  		\draw[thick, dotted] (Y2.north) |- ($(Y.north)+(0,1)$) ;
  		\draw[thick, dotted] 
  		($(Y2.north)+(0,1)$) -- ++(0.5,0);
  		\draw[thick, dotted] 
  		($(Y1.north)+(0,1)$) -- ++(-0.5,0);
  		\draw[decorate, decoration={brace, amplitude=10pt, mirror}]
  		($(y1.south west)-(0.3,0.3)$) -- ($(dots1.south east)+ (+0.5,-0.3)$)
  		node[midway, below=9pt] {$\rho_1$};
  		\draw[decorate, decoration={brace, amplitude=10pt, mirror}]
  		($(dots2.south west)-(0.3,0.3)$) -- ($(y2.south east)+ (+0.5,-0.3)$)
  		node[midway, below=9pt] {$\rho_2$};
  	\end{scope}

  	\draw [-{Stealth[length=3mm, width=2mm]}] (6.5,0.5) -- (7.5,0.5);

  	\begin{scope}[xshift=8cm]
  			\coordinate (y1) at (0,0);
  		\coordinate (dots1) at (1.0,0);
  		\coordinate (y) at (2.0,0);
  		\coordinate (s1) at (3.0,0);
  		\coordinate (s2) at (4.0,0);
  		\coordinate (dots2) at (5.0,0);
  		\coordinate (y2) at (6.0,0);
  		
  		\node[var] (Y1) at (y1) {$x_1$};
  		\node[var] (S1) at (s1) {$s$};
  		\node[var] (S2) at (s2) {$s$};
  		\node[var] (Y) at (y) {$x$};
  		\node[var] (Y2) at (y2) {$x_2$};
  		
  		\node at (1.0,0) {$\cdots$};
  		\node at (5.0,0) {$\cdots$};
  		\node[below=4pt] at (Y.south) {$i$};
  		\node[below=4pt] at (S1.south) {$i+1$};
  		\node[below=4pt] at (S2.south) {$i+2$};
  		
  		\draw[thick] (S1.north) |- ($(S2.north)+(0,0.6)$) -| (S2.north);
  		
  		\draw[thick,dotted] (Y1.north) |- ($(Y.north)+(0,1)$) -| (Y.north);
  		\draw[thick, dotted] (Y2.north) |- ($(Y.north)+(0,1)$) ;
  		\draw[thick, dotted] 
  		($(Y2.north)+(0,1)$) -- ++(0.5,0);
  		\draw[thick, dotted] 
  		($(Y1.north)+(0,1)$) -- ++(-0.5,0);
  	\draw[decorate, decoration={brace, amplitude=10pt, mirror}]
  	($(y1.south west)-(0.3,0.3)$) -- ($(dots1.south east)+ (+0.5,-0.3)$)
  	node[midway, below=9pt] {$\rho_1$};
  	\draw[decorate, decoration={brace, amplitude=10pt, mirror}]
  	($(dots2.south west)-(0.3,0.3)$) -- ($(y2.south east)+ (+0.5,-0.3)$)
  	node[midway, below=9pt] {$\rho_2$};
  	\end{scope}
  \end{tikzpicture}
  \caption{Graphical representation of bijection $\Phi$.}
  \label{fig:5}
  \end{figure}
\end{center}
  In this case we define 
  $ \Phi(\pi, B, D) = (\pi^\prime, B^\prime, D^\prime)$ 
  where
  $ B^\prime = (B \setminus \{j \}) \cup \{ j-1\} $,
  $ D^\prime = D \cup\{j-1\}$, and 
  $ \pi^\prime $ constructed as follows: if $ \pi $ has the blocks $(i, i+1), B_1, B_2, \dots, B_l $  such that 
 $ i+ 2 \in B_1 $, then 
 $\pi^\prime $ has the blocks $ (i+1, i+2), B_1^\prime , B_2, \dots, B_l $ where
 $ B_1^\prime = B_1 \setminus \{ i+2\} \cup \{i\} $ (see Figure \ref{fig:5}).
 
  With these notations, we have that $ \pi^\prime $ is non-crossing (as $ \pi $ without the block $ (i, i+1)$ is the same as $ \pi^\prime$ without the block $ (i+1, i+2)$ ), that $| D^\prime | = | D | + 1 $ and that if 
     \begin{align*}
  \big( y_1, y_2, \dots, y_{n+k}\big)
  = & \big(
  (s, x)_{|\iota_{B^\prime}(B^\prime \setminus D^\prime)},
  (x, s)_{| \iota_{B^\prime}(D^\prime)}, s_{|\iota_{B^\prime} ([n]\setminus B^\prime)}
  \big)\\
  \big( x_1, x_2, \dots, x_{n+k}\big)
  = & \big(
  (s, x)_{|\iota_{B}(B \setminus D)},
  (x, s)_{| \iota_{B}(D)}, s_{|\iota_{B} ([n]\setminus B)}
  \big)
  \end{align*} 
  then $ x_u = y_u $ for 
  $ u \notin\{i, i+1, i+1\} $ 
  and
   $ y_{i}= x = x_{i+2} $,
    while
   $ y_{i+1}=y_{i+2}= s = x_i$, 
   which give that 
  $ E (\pi, B, D) = - E(\pi^\prime, B^\prime, D^\prime)$
  also in this case.
\end{proof}

\begin{cor}
  If $ x $ is freely infinitely divisible, then $ s + i[s, x] $ is also freely infinitely divisible.
\end{cor}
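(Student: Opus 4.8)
The plan is to deduce the statement directly from the convolution identity of Theorem~\ref{thm:main1}, combined with two standard structural facts about free infinite divisibility: that commutators of free FID elements are again FID \cite{AHS}, and that the class of FID distributions is closed under free additive convolution. No new combinatorics is needed; the entire analytic difficulty has already been packaged into Theorem~\ref{thm:main1}.

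First I would note that the semicircular element $s$ is itself freely infinitely divisible. Indeed, all of its free cumulants of order $\geq 3$ vanish, so its $R$-transform is linear and $\mu_s$ sits inside the $\boxplus$-semigroup of free Brownian motion $(s_t)_{t \geq 0}$. Since $x$ is FID by hypothesis, $s$ is FID, and $s$ and $x$ are free, the Arizmendi--Hasebe--Sakuma theorem \cite{AHS} applies and yields that $i[s,x]$ is freely infinitely divisible.

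Next I would invoke Theorem~\ref{thm:main1}, which supplies the distributional identity
\[
\mu_{s+i[s,x]} = \mu_s \boxplus \mu_{i[s,x]}.
\]
Both factors on the right are FID: $\mu_s$ because $s$ is semicircular, and $\mu_{i[s,x]}$ by the previous step. To finish, one uses that the FID class is stable under $\boxplus$. Concretely, if $\mu$ and $\nu$ are FID with associated free convolution semigroups $(\mu_t)_{t \geq 0}$ and $(\nu_t)_{t \geq 0}$, then $(\mu_t \boxplus \nu_t)_{t \geq 0}$ is again a $\boxplus$-semigroup interpolating from $\delta_0$ at $t=0$ to $\mu \boxplus \nu$ at $t=1$, so $\mu \boxplus \nu$ is FID. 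Applying this with $\mu = \mu_s$ and $\nu = \mu_{i[s,x]}$ shows that $\mu_{s+i[s,x]}$ is freely infinitely divisible.

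There is essentially no obstacle to overcome in this argument, which is the point of isolating Theorem~\ref{thm:main1} beforehand. What deserves emphasis is that although $s$ and $i[s,x]$ fail to be free by Lemma~\ref{lem:31}, the FID conclusion never invokes freeness of these two elements: it rests solely on the distributional identity $\mu_{s+i[s,x]} = \mu_s \boxplus \mu_{i[s,x]}$ together with the closure of the FID class under $\boxplus$. This is precisely what makes the ``free convolution without freeness'' phenomenon genuinely useful here, rather than a mere curiosity.
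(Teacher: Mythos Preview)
Your argument is correct and is exactly the reasoning the paper intends: the corollary is stated without proof because, as the introduction already sketches, it follows immediately from Theorem~\ref{thm:main1} together with \cite{AHS} and the closure of the FID class under $\boxplus$. There is nothing to add.
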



\section{Free cumulants of $x+i[x,s]$}

Recall the notation from the Introduction . For a fixed interval partition on \( n \) elements, \( \sigma \in \mathrm{Int}(n) \), with \( |\sigma| = k \) denote the number of blocks of \( \sigma \).  
For such a partition \( \sigma \), consider a non-crossing partition \( \pi \in \mathrm{NC}(k) \).  
We denote by \( \pi(\sigma) \) the non-crossing partition \( \rho \in \mathrm{NC}(n) \) whose blocks are unions of the blocks of \( \sigma \) corresponding to indices in the same block of \( \pi \).  
More precisely, if \( \sigma = \{W_1, \ldots, W_k\} \) and \( \pi = \{V_1, \ldots, V_m\} \), then
\[
\rho = \pi(\sigma) = \{Z_1, \ldots, Z_m\}, \quad \text{where } Z_i = \bigcup_{j \in V_i} W_j, \; i = 1,\ldots,m.
\]
See Figure \ref{fig:1} in the Introduction for and illustration of how $\pi(\sigma)$ is defined.
The main result of this section is the following Proposition. 

\begin{prop}\label{prop:41}
	Let \( x, s \) be free random variables, and suppose \( s \) is semicircular. Then
	\begin{align}\label{eq:41}
	\kappa_n(x+i[x,s])
	=\kappa_n(x)+ \sum_{k=1}^{\lfloor n/2 \rfloor} 
	\sum_{\substack{\sigma \in \mathrm{Int}_{\geq 2}(n) \\ |\sigma|=k,\, 1 \in V_1}}
	\sum_{\pi \in \mathrm{NC}(k)} 
	|V_1|\, \kappa_{\pi(\sigma)}(x).
	\end{align}
\end{prop}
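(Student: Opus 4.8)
The plan is to compute $\kappa_n(x+i[x,s])$ by the same two-step scheme used for Theorem~\ref{thm:main1}: multilinear expansion followed by the product formula of Theorem~\ref{thm:product-entries}, and then a combinatorial reorganization of the surviving terms. Writing $x+i[x,s]=x+ixs-isx$ and using multilinearity of the free cumulants, I would expand
\[
\kappa_n(x+i[x,s])=\sum_{(P_1,\dots,P_n)}\kappa_n(P_1,\dots,P_n),\qquad P_j\in\{x,\;ixs,\;-isx\}.
\]
The unique all-$x$ assignment yields $\kappa_n(x)$, the first term on the right-hand side of \eqref{eq:41}; the task is to identify the contribution of the remaining assignments. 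Each such term is a free cumulant whose arguments are products of length one or two, so Theorem~\ref{thm:product-entries} rewrites it as $\sum_{\pi\vee\theta=\mathbbm{1}}\kappa_\pi[\,\cdots\,]$, where $\theta$ is the interval partition grouping the letters according to the position they came from.

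The next step is to restrict the admissible $\pi$. Freeness of $x$ and $s$ kills every $\pi$ having a block that mixes an $s$-letter with an $x$-letter, and semicircularity of $s$ kills every $\pi$ having a block of $s$-letters of size other than two. Consequently, in a nonzero term the $s$-letters are matched into non-crossing pairs, each contributing a factor $\kappa_2(s)$, while the $x$-letters fall into the remaining blocks and produce a factor $\kappa_\rho(x)$, where $\rho$ is the non-crossing partition they induce; in particular only assignments with an even number of commutator factors contribute. At this point the problem is purely combinatorial: sum, over all admissible (assignment, pairing, $x$-partition) data satisfying the maximality condition $\pi\vee\theta=\mathbbm{1}$, the signed weight $(\pm1)\,\kappa_\rho(x)$.

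The core of the argument is to repackage this sum into the form appearing in \eqref{eq:41}. As in the proof of Theorem~\ref{thm:main1}, I expect a sign-reversing involution on the configurations that are \emph{not} of the desired shape — in particular those in which an $x$-letter sits in a singleton block, contributing a spurious low-order factor $\kappa_1(x)$ — so that all such contributions cancel in pairs. The configurations that survive should be exactly encoded by two pieces of data: first, the interval partition $\sigma\in\mathrm{Int}_{\ge 2}(n)$ of the $n$ positions, whose blocks are the maximal groups of positions fused together by the local pattern of adjacent commutator factors and their $s$-pairings (each block necessarily has at least two elements, and within a block all the $x$-letters coalesce into a single cumulant argument); and second, a non-crossing partition $\pi\in\mathrm{NC}(k)$ on the $k=|\sigma|$ blocks of $\sigma$, recording the remaining, long-range $s$-pairings that further merge these arguments. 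With this dictionary the surviving $x$-factor is precisely $\kappa_{\pi(\sigma)}(x)$, and the surviving signs all collapse to $+1$.

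Finally, the multiplicity $|V_1|$ and the constraint that $1$ and $k$ belong to a common block of $\pi$ should be extracted from the cyclicity of $\tau$. Computing $\kappa_n$ breaks the cyclic symmetry of the word at position $1$; the block $V_1$ of $\pi$ that straddles this cut contains both the first and the last block of $\sigma$ — which is exactly why $1$ and $k$ are forced into the same block — and the configuration restricted to $V_1$ admits $|V_1|$ cyclically rotated representatives, each giving the same value $\kappa_{\pi(\sigma)}(x)$. I expect this last step to be the main obstacle: pinning down the precise multiplicity $|V_1|$ and the $1\sim k$ condition through the cyclic/traciality symmetry, while keeping the sign cancellations and the adjacency bookkeeping of the preceding step consistent. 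The earlier reductions, by contrast, are routine consequences of the product formula and of the vanishing of mixed cumulants and of higher semicircular cumulants.
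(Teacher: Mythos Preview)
Your overall scaffolding --- multilinear expansion, product formula, freeness/semicircular reductions, then a sign-reversing involution and a cyclic-symmetry count --- matches the paper's. But the combinatorial picture you draw for the \emph{surviving} terms is off in a way that matters.

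You describe $\pi\in\mathrm{NC}(k)$ as ``recording the remaining, long-range $s$-pairings that further merge these arguments.'' In fact, in every surviving configuration \emph{all} $s$-pairings are local: after the cancellation, the only contributing words are (up to a cyclic shift) of the form
\[
s\,\underbrace{x\cdots x}_{V_1}\,s\;s\,\underbrace{x\cdots x}_{V_2}\,s\;\cdots\;s\,\underbrace{x\cdots x}_{V_k}\,s,
\]
with each pair of adjacent $s$'s matched (including the first and last, cyclically). The blocks $V_1,\dots,V_k$ give the interval partition $\sigma$, and $\pi\in\mathrm{NC}(k)$ encodes how the \emph{$x$-blocks} --- not any $s$-pairings --- merge the $k$ groups. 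This is precisely why the weight is $\kappa_{\pi(\sigma)}(x)$: it is the $x$-part of the non-crossing partition that does the long-range connecting needed for $\pi\vee\tau_B=\mathbbm{1}$, while the $s$-pairs contribute only local $\kappa_2(s)$ factors.

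Correspondingly, the cancellation mechanism is not an involution targeting $x$-singletons. The paper classifies each $s$-pair $(j,l)$ into five types according to its position relative to the block structure of $\pi_{(j)}\vee\tau_B$, and builds two sign-reversing bijections matching configurations whose first ``bad'' $s$-pair is of type~2 with those of type~4, and similarly types~3 with~5. Only configurations whose every $s$-pair is of type~1 --- exactly the all-local pattern above --- survive; $x$-singletons are excluded only \emph{indirectly}, because their presence forces a type~3 $s$-block. Your sketch does not anticipate this structure, and the involution you gesture at would not produce the needed cancellation.

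A small notational slip: $V_1$ is the first block of $\sigma$ (the one containing $1$), not a block of $\pi$. Your heuristic for the factor $|V_1|$ and the $1\sim k$ condition on $\pi$ is otherwise on the right track --- both arise from the $|V_1|$ cyclic shifts placing position $1$ inside the first $x$-group.
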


\begin{proof}
			We will show that, when expanding $\kappa_n(x+i[x,s])$
	(first by linearity, and then by application of products as entries formula from Theorem \ref{thm:product-entries}) only contributions from non--crossing partitions of the form $\pi(\sigma)$ described in \eqref{eq:41} may not cancel.
 For other partitions, the contribution value might be non-zero but it is canceled by the contribution of some other partition. 
	Any partition $\pi$ whose contribution may not cancel is (after a cyclic permutation) such that for \[\kappa_{\pi}(s,\underbrace{x,\ldots,x}_{V_1},s,s,\underbrace{x,\ldots,x}_{V_2},s,s,\ldots,s,s,\underbrace{x,\ldots,x}_{V_k},s),
	\]
	 we have that neighbouring $s$ are paired by $\pi$ and all $x$ between pairs of $s$ are in the same block and blocks of $x$ can be merged in a non--crossing way. Thus we obtain the interval partition $\sigma=\{V_1,\ldots,V_k\}$ and all non--crossing partitions of blocks of $\sigma$ which gives the final result. The factor $|V_1|$ comes from the fact that free cumulants are invariant by cyclic permutation, and we have exactly $|V_1|$ such shifts, with the last one being
	 \[\kappa_{\pi}(x,s,s,\underbrace{x,\ldots,x}_{V_2},s,s,\ldots,s,s,\underbrace{x,\ldots,x}_{V_k},s,s,\underbrace{x,\ldots,x}_{V_1-1}).
	 \]
	 Further shifts are countained in shifts of other interval partitions form $\mathrm{Int}(n)$. For example shifts of second block are contained in the interval partition $\sigma$ where the first block has cardinality $|V_2|$ and last block has cardinality $|V_1|$.

	With the notations from Section \ref{Section:3} we have that

\begin{align*}
\kappa_n\big(x +i[x,s]\big) = & 
 \sum_{B \subseteq [n] }
 i^{|B|}\kappa_n \big( [x, s]_{| B }, x_{| [n]\setminus B }  )\\
 =
 \sum_{B \subseteq [n] }  &
  \sum_{D\subseteq B }
  \sum_{\substack{\pi \in NC(n+k) \\ \pi \vee \tau_B = \mathbbm{1}_{n+k} } } 
  E (\pi, B, D)
\end{align*}	
 where \( E(\pi, B, D) =  i^{|B|}(-1)^{| D|}  \kappa_{\pi} \big[ 
(x, s)_{ | \iota_B (B \setminus D)} , (s, x)_{| \iota_B (D)} , x_{| \iota_B ([n] \setminus B )}\big].
\)

First we shall identify terms that cancel in the above summation. For this, we need to introduce several notations. 
 
Assume that 
$ B \subseteq [ n ]$ 
is such that $ | B | = k $. Denote
 \[(x_1, x_2, \dots, x_{n+ k } ) = 
\big(
(x, s)_{ | \iota_B (B \setminus D)} , (s, x)_{| \iota_B (D)} , x_{| \iota_B ([n] \setminus B )}
\big).
 \] 
 
 To simplify the terminology, we shall call $s$-\textit{blocks} of $(\pi, B, D)$ the blocks of $ \pi $ that contain some $j $ with $ x_j =s $.
 
For
 $ E(\pi, B, D) \neq 0$ 
 we need all $s$-blocks  to be pairs $ (j, l)$ with $x_j = x_l = s $ (in  particular $ | B |$ must be even). 
 Moreover, if $ j < l $ and
  $ j \in \iota_{B} (B \setminus D) $ 
  and $ l \in \iota_{B} (D) $, then $ l= j+1$, 
  otherwise 
  $\{ j+1, j+2, \dots, l-1\} \trianglelefteq \pi\vee \tau_B $; also, if 
  $ j \in \iota_{B}(D) $ 
  and 
  $ l \in \iota_{B}(B \setminus D)$
  then $ j=1$ and $ l=n+k $, otherwise
  $ \{ j, j+1, \dots, l\} \trianglelefteq \pi\vee \tau_B $.
  
   Next, if $ (j, l)$ is a $s$-block of $\pi $, we define the non-crossing partition $\pi_{(j)}$ by replacing in $\pi $ the block $(j, l)$ with singletons $ (j)$ and $(l)$. Denote by \textrm{Bl}$(j)$, respectively by \textrm{Bl}$(l)$, the blocks of
    $ \tau_B \vee \pi_{(j)} $
    containing $ j $, respectively $ l $. 
    Since $(j, l)$ is not a block of $ \tau_B $, we have that
    $ \textrm{Bl}(j) \cup \textrm{Bl}(l) = [n+k]$, which furthermore gives that (after a cyclic permutation) \textrm{Bl}$(j)$ and \textrm{Bl}$(l)$ are interval blocks.
    In order to show that these are the only possibilities for $s$--blocks we need to exclude the possibilities of $j,l$ both being in $\iota_{B}(D)$ or in $\iota_{B}(B\setminus D)$.
    
    With \textrm{Bl}$(j)$ and \textrm{Bl}$(l)$  We shall organize the $s$-blocks $j, l$ with $ j < l $ into 5 disjoint classes:
    \begin{enumerate}
    	\item[$\bullet$] type 1:
    	\textrm{Bl}$(j)$ = \textrm{Bl}$(l) = [n+k]$ 
    	and  $l = j+1 $
    	or $ j=1$, $ l = n+k $;
    	\item[$\bullet$] type 2:
    	$ l = j+1$,  \textrm{Bl}$(j) \neq $\textrm{Bl}$(l)$
    	and \textrm{Bl}$(j) \subseteq \{1, 2, \dots, j\}$;   
    \item[$\bullet$] type 3: 
    $ l = j+1$,  
    \textrm{Bl}$(j) \neq $\textrm{Bl}$(l)$ 
    and \textrm{Bl}$(l) \subseteq \{ l, l+1, \dots, n-1\}$
    or \\
    ${}$\hspace{1cm} $ j =1 $, $ l = n+k $ and \textrm{Bl}$(j) \neq $ \textrm{Bl}$(l)$;
     \item[$\bullet$] type 4:
      $l > j+1 $ and 
      \textrm{Bl}$(j) = \{j, j+1, \dots, l-1\}$;
      \item[$\bullet$] type 5:
      $l > j+1 $ and 
      \textrm{Bl}$(l) = \{ j+1, j+2, \dots, l \} $.
 \end{enumerate}
 
    On the $s$-blocks of $(\pi, B, D)$ we shall consider 2 order relations: 
    \begin{enumerate}
    	\item[(i)] the \emph{left-to right lexicographic order}: $(a, b) $ is less that $(c, d)$ if $ a < c $;
    	\item[(ii)] the \emph{right-to-left lexicographic order}: $ (a, b) $ is less that $(c, d)$ if $ b < d $.
    \end{enumerate}
    
    Finally, we shall organize the triples $(\pi, B, D)$ as above into 5 disjoint sets. We say that $(\pi, B, D)$ is
    \begin{enumerate}
    	\item[$\bullet$] in the set $\mathcal{A}_1$ if all $s$-blocks of $(\pi, B, D)$ are of type 1;
    	\item[$\bullet$] in the set $\mathcal{A}_2$ if 
    	$(\pi, B, D)$ contains some $s$-blocks of type 2 or of type 4, but the first, in \emph{right-to-left lexicographic order} of these blocks is of type 2
    	\item[$\bullet$] in the set $\mathcal{A}_3$ if
    	$(\pi, B, D)$ does not contain $s$-blocks of type 2 or of type 4, but contains an $s$-blocks of type 3 or of type 5 and the first of these $s$-blocks, in \emph{left-to-right lexicographic order} is of type 3.
    	\item[$\bullet$] in the set $\mathcal{A}_4 $ if 
    	$(\pi, B, D)$ contains some $s$-blocks of type 2 or of type 4, but the first, in \emph{right-to-left lexicographic order} of these blocks is of type 4
    	\item[$\bullet$] in the set $\mathcal{A}_5$ if
    	$(\pi, B, D)$ does not contain $s$-blocks of type2 not of type 4, but contains $s$-blocks of type 3 or of type 5 and the first of these $s$-blocks, in \emph{left-to-right lexicographic order} is of type 5.
    \end{enumerate}

  For $ v \in\{2, 3\}$ we shall define the bijections 
  $ \Phi_v:\mathcal{A}_v \rightarrow \mathcal{A}_{v+2}$
  such that
  $ E(\pi, B, D) = - E(\Phi_v(\pi, B, D))$.
  
  We define $\Phi_2$ as follows. Let $(\pi, B, D) \in \mathcal{A}_2 $ and let $ (j, j+1)$ be its first, in the \emph{right-to-left lexicographic order}, $s$-block
  which of type 2 or of type 4. Then $(j, j+1)$ is of type 2 and let \textrm{Bl}$(j)= \{ u, u+1, \dots, j\}$. From the definition of \textrm{Bl}$(j)$, we have that $\{ u, u+1, \dots,j, j+1\} \trianglelefteq \pi$.
  
  Note that $ x_j = x_{j+1} = s $ (since $(j, j+1) $ is an $s$-block), hence $ x_{j-1} = x_{j+2} = x $.  Let $ a = \iota_b^{-1}(u) $, $b = \iota_b^{-1}(j)$. We have that $ b \in B \setminus D $ since $\iota_B(b) =\{ j-1, j\} $ with $ x_{j-1} = x $ and $ x_j = S $.
  
  Define 
  \begin{align*}
  B^\prime &= B \cap \big( [n]\setminus\{ a, a+1, \dots, b\}\big) \cup \{a\} \cup \lambda \big( B \cap \{ a, a+1, \dots, b-1\}\big) \\
  D^\prime &=
  D \cap \big( [n]\setminus\{ a, a+1, \dots, b\}\big)
  \cup \{ a\} \cup \lambda \big( D \cap \{ a, a+1, \dots, b-1\}\big)
  \end{align*}
  where $\lambda$ is the right-translation $ \lambda(t) = t+1$.
  
   Since $ b = \iota_B^{-1} (j) \in B \setminus D $, we have that
  \begin{align*}
  |B^\prime| & = | B \cap \big([n]\setminus \{b\} \big) | + 1 = | B |\\
  |D^\prime | & = | D \cap \big([n]\setminus \{b\} \big) | + 1 = | D | + 1,
  \end{align*}
  hence 
  \begin{equation}\label{eq:2-4}
   i^{| B^\prime |}(-1)^{| D^\prime|} = - i^{| B |}(-1)^{| D|}.
   \end{equation}
  
  Furthermore define $\pi^\prime $ as follows.  The restrictions of $\pi $ and $\pi^\prime $ to the set  $ [n+k]\setminus \{ u, u+1, \dots, j+1\}$ coincide. If $\pi_{| \{ u, u+1, \dots, j+1\}} $ has blocks $ B_0, B_1, B_2, \dots, B_q $ such that $ B_0 = (j, j+1)$ and $ B_1 $ contains $ j-1$, then $\pi^\prime_{| \{ u, u+1, \dots, j+1\}} $ has blocks $ (u, j+1)$, $\lambda^2(B_1 \setminus \{ j-1\}) \cup \{u+1 \}$, $ \lambda^2(B_2)$, \dots, $\lambda^2(B_q)$, see Figure \ref{Fig:5} for graphical representation of $\Phi_2$.
  \begin{center}
  	{
  		\setlength{\intextsep}{-4pt}
  	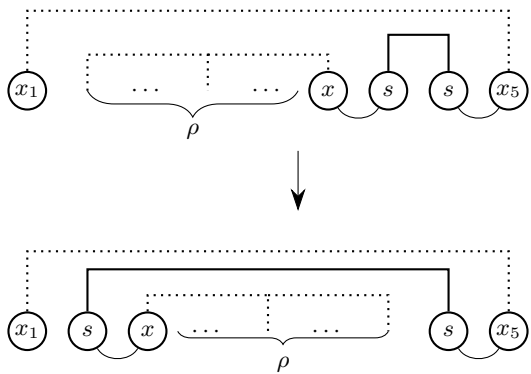
\begin{figure}[H]
  	\begin{tikzpicture}[
  		scale=0.8,
  		baseline=(current bounding box.center),
  		every node/.style={font=\small},
  		var/.style={
  			draw,
  			circle,
  			minimum size=5mm,
  			inner sep=1pt,
  			thick
  		},
  		>=Stealth
  		]
  		
  		\begin{scope}[yshift=0cm]
   			\coordinate (x1) at (0,0);
			 \coordinate (x2) at (1.0,0);
			 \coordinate (dots1) at (2.0,0);
			 \coordinate (x3) at (3.0,0);
			 \coordinate (dots2) at (4.0,0);
			 \coordinate (x4) at (5.0,0);
			 \coordinate (s1) at (6.0,0);
			 \coordinate (s2) at (7.0,0);
			 \coordinate (x5) at (8.0,0);
			 \node[var] (X1) at (x1) {$x_1$};
			 \node[var] (X4) at (x4) {$x$};
			 \node[var] (S1) at (s1) {$s$};
			 \node[var] (S2) at (s2) {$s$};
 				\node[var] (X5) at (x5) {$x_5$};
			 
			 \node at (2.0,0) {$\cdots$};
			 \node at (4.0,0) {$\cdots$};
  			
  			
  			\draw (X4) to[out=300, in=240] (S1);
  			\draw (S2) to[out=300, in=240] (X5);
  			\draw[thick] (S1.north) |- ($(S2.north)+(0,0.6)$) -|(S2.north);
  			\draw[thick, dotted] (X1.north)	|- ($(X5.north)+(0,1)$) -|(X5.north);
  			\draw[thick, dotted] (x2.north)|-
  			($(x3.north)+(0,0.6)$) -|(x3.north);
  			\draw[thick, dotted] 
  			($(x3.north)+(0,0.6)$) -|(X4.north);
  			\draw[decorate, decoration={brace, amplitude=10pt, mirror}]
  			(x2.south west) -- ($(x4.south east)+ (-0.5,0)$)
  			node[midway, below=9pt] {$\rho$};
  		\end{scope}

  		\begin{scope}[yshift=-2cm]
\draw [-{Stealth[length=3mm, width=2mm]}] (4.5,1) -- (4.5,0);
  	\end{scope}

  		\begin{scope}[yshift=-4cm]
	\coordinate (x1) at (0,0);
	\coordinate (s1) at (1.0,0);
	\coordinate (x2) at (2.0,0);
	\coordinate (dots) at (3.0,0);	
	\coordinate (x3) at (4.0,0);
	\coordinate (dots) at (5.0,0);
	\coordinate (x4) at (6.0,0);
	\coordinate (s2) at (7.0,0);
	\coordinate (x5) at (8.0,0);
	\node[var] (X1) at (x1) {$x_1$};
	\node[var] (X2) at (x2) {$x$};
	\node[var] (S1) at (s1) {$s$};
	\node[var] (S2) at (s2) {$s$};
	\node[var] (X5) at (x5) {$x_5$};
	
	\node at (3.0,0) {$\cdots$};
	\node at (5.0,0) {$\cdots$};
	
	
	\draw (S1) to[out=300, in=240] (X2);
	\draw (S2) to[out=300, in=240] (X5);
	\draw[thick] (S1.north) |- ($(S2.north)+(0,0.7)$) -|(S2.north);
	\draw[thick, dotted] (X1.north)	|- ($(X5.north)+(0,1)$) -|(X5.north);
	\draw[thick, dotted] (X2.north)|-
	($(x3.north)+(0,0.6)$) -|(x3.north);
	\draw[thick, dotted] 
	($(x3.north)+(0,0.6)$) -|(x4.north);
	\draw[decorate, decoration={brace, amplitude=5pt, mirror}]
	($(x2.south west)+(0.5,0)$) -- (x4.south east)
	node[midway, below=6pt] {$\rho$};
\end{scope}
  		
%
  			
  			
  		
  	\end{tikzpicture}
  	\caption{Graphical representation of bijection $\Phi_2$.}
  	\label{Fig:5}
  	\end{figure}
  }
  \end{center}
  
   The block containing $u $ in $ \pi^{\prime}_ {(u)} \vee \tau_{B^\prime}$ is then $\{u, u+1, \dots, j\} $, hence $ (u, j+1)$ is of type 4 in 
   $(\pi^\prime, B^\prime, D^\prime)$.
    Moreover, all $s$-blocks of $(\pi^\prime, B^\prime, D^\prime)$ less than $ (u, j+1)$ are $s$-blocks of $(\pi, B, D)$ less than $(j, j+1)$, hence not of type 2 or 4, so $(\pi^\prime, B^\prime, D^\prime)\in \mathcal{A}_4$.

  Finally, if $(y_1, \dots, y_{n+k}) =
  \big(
  (x, s)_{ | \iota_{B^\prime} (B^\prime \setminus D^\prime)} , (s, x)_{| \iota_{B^\prime} (D^\prime)} , x_{| \iota_{B^\prime}([n] \setminus {B^\prime} )}
  \big) $
  then we have that 
  $y_t = x_t$ for $ t \notin \{u, u+1, \dots, j\}$,
  $y_u = x_j $, $y_{u+1} = x_{j-1} $ and $ y_{v+2} = x_v $ for $ v \in \{ u, \dots, j-2\}$, 
  therefore
  $\kappa_\pi\big[x_1, x_2, \dots, x_{n+k}] = \kappa_{\pi^\prime}[ y_1, y_2, \dots, y_{n+k}]$, 
  which, together with (\ref{eq:2-4}), gives that
  $ E(\pi, B, D) = -E(\pi^\prime, B^\prime, D^\prime)$.

  The construction of $\Phi_3 $ is similar. Let 
  $(\pi, B, D) \in \mathcal{A}_3 $. First we consider the case when 
   its first, in the
  \emph{left-to-right lexicographic order}, $s$-block of type 3 if of the form 
  $(j, j+1)$ with $j+1 < n+k$.
  Let
   \textrm{Bl}$(j+1) = \{j+1, j+2, \dots, u-1, u\}$. 
   Again, from the definition of \textrm{Bl}$(j+1)$,  we have that 
  $\{j, j+1, \dots, u\}  \trianglelefteq \pi$.
  
  Note that 
  $ x_j = x_{j+1}= s $
   and 
   $ x_{j-1}=x_{j+2} = x $. 
   Therefore, if 
   $ a = \iota_b^{-1}(u)$
    and  
  $ b = \iota_B^{-1}(j+1)$, 
  we have that 
  $b \in  D$. 
 
  Define 
  \begin{align*}
  B^\prime &= B \cap \big( [n]\setminus \{ b, b+1, \dots, a\}\big) \cup \{ a\} \cup \lambda\big(B \cap\{ b, b+1, \dots, a-1\}\big) \\
  D^\prime & = D \cap \big( [n] \setminus \{ b, b+1, \dots, a\} \big) \cup \lambda\big(D \cap\{ b, b+1, \dots, a-1\}\big) 
  \end{align*}
  since $ b \in D $, we have that $ |B^\prime| = | B | $ 
  and $ | D^\prime | = |D | -1 $, hence
  \begin{equation}\label{eq:3-5}
  i^{| B^\prime |}(-1)^{| D^\prime|} = - i^{| B |}(-1)^{| D|}
  \end{equation}

  In this case, we define $\pi^\prime$ as follows. We let
  $\pi^\prime_{| [n+k] \setminus \{j, j+1, \dots, u\}} = 
 \pi_{| [n+k] \setminus \{j, j+1, \dots, u\}} $
 and if
 $ \pi_{| \{j, j+1, \dots, u\} } $ 
 has blocks
 $ B_0, B_1, \dots, B_q $ 
 such that 
 $ B_0 = (j, j+1)$ 
 and $ B_1$ contains $j+2$, then
 $\pi^\prime_{| \{ j, j+1, \dots, u\}}$
 has blocks
 $(j, u)$, $\lambda^2 \big(B_1 \setminus\{ j+2\} ) \cup \{ u-1 \} $, 
 $\lambda^2(B_2)$, \dots, $\lambda^2(B_q)$, see the Figure \ref{fig:6} for graphical representation of $\Phi_3$.
   \begin{center}
   	{
   	\setlength{\intextsep}{-3pt}
   	\begin{figure}[H]
   	\begin{tikzpicture}[
   		scale=0.8,
   		baseline=(current bounding box.center),
   		every node/.style={font=\small},
   		var/.style={
   			draw,
   			circle,
   			minimum size=5mm,
   			inner sep=1pt,
   			thick
   		},
   		>=Stealth
   		]
   		
   		\begin{scope}[yshift=0cm]
   			\coordinate (x1) at (0,0);
   			\coordinate (s1) at (1.0,0);
   			\coordinate (s2) at (2.0,0);
   			\coordinate (x2) at (3.0,0);
   			\coordinate (dots1) at (4.0,0);
   			\coordinate (x3) at (5.0,0);
   			\coordinate (dots2) at (6.0,0);
   			\coordinate (x4) at (7.0,0);
   			\coordinate (x5) at (8.0,0);
   			\node[var] (X1) at (x1) {$x_1$};
   			\node[var] (X2) at (x2) {$x$};
   			\node[var] (S1) at (s1) {$s$};
   			\node[var] (S2) at (s2) {$s$};
   			\node[var] (X5) at (x5) {$x_5$};
   			
   			\node at (4.0,0) {$\cdots$};
   			\node at (6.0,0) {$\cdots$};
   			
   			
   			\draw (X1) to[out=300, in=240] (S1);
   			\draw (S2) to[out=300, in=240] (X2);
   			\draw[thick] (S1.north) |- ($(S2.north)+(0,0.6)$) -|(S2.north);
   			\draw[thick, dotted] (X1.north)	|- ($(X5.north)+(0,1)$) -|(X5.north);
   			\draw[thick, dotted] (X2.north)|-
   			($(x3.north)+(0,0.6)$) -|(x3.north);
   			\draw[thick, dotted] 
   			($(x3.north)+(0,0.6)$) -|(x4.north);
   			\draw[decorate, decoration={brace, amplitude=10pt, mirror}]
   			($(x2.south east)+ (0.5,0)$) -- (x4.south east)
   			node[midway, below=9pt] {$\rho$};
   		\end{scope}

   		\begin{scope}[yshift=-2cm]
   			\draw [-{Stealth[length=3mm, width=2mm]}] (4.5,1) -- (4.5,0);
   		\end{scope}
   		
   		\begin{scope}[yshift=-4cm]
   			\coordinate (x1) at (0,0);
   			\coordinate (s1) at (1,0);
   			\coordinate (s2) at (7.0,0);
   			\coordinate (x2) at (6.0,0);
   			\coordinate (dots1) at (3.0,0);
   			\coordinate (x3) at (2.0,0);
   			\coordinate (dots2) at (3.0,0);
   			\coordinate (x4) at (4.0,0);
   			\coordinate (x5) at (8.0,0);
   			\node[var] (X1) at (x1) {$x_1$};
   			\node[var] (X2) at (x2) {$x$};
   			\node[var] (S1) at (s1) {$s$};
   			\node[var] (S2) at (s2) {$s$};
   			\node[var] (X5) at (x5) {$x_5$};
   			
   			\node at (3.0,0) {$\cdots$};
   			\node at (5.0,0) {$\cdots$};
   			
   			
   			\draw (X1) to[out=300, in=240] (S1);
   			\draw (S2) to[out=240, in=300] (X2);
   			\draw[thick] (S1.north) |- ($(S2.north)+(0,0.6)$) -|(S2.north);
   			\draw[thick, dotted] (X1.north)	|- ($(X5.north)+(0,1)$) -|(X5.north);
   			\draw[thick, dotted] (X2.north)|-
   			($(x3.north)+(0,0.6)$) -|(x3.north);
   			\draw[thick, dotted] 
   			($(x4.north)+(0,0.6)$) -|(x4.north);
   			\draw[decorate, decoration={brace, amplitude=10pt, mirror}]
   			($(x3.south east)$) -- ($(x2.south east)+ (-0.5,0)$)
   			node[midway, below=9pt] {$\rho$};
   		\end{scope}
   	\end{tikzpicture}
   	\caption{Graphical representation of bijection $\Phi_3$.}
   	\label{fig:6}
   	\end{figure}
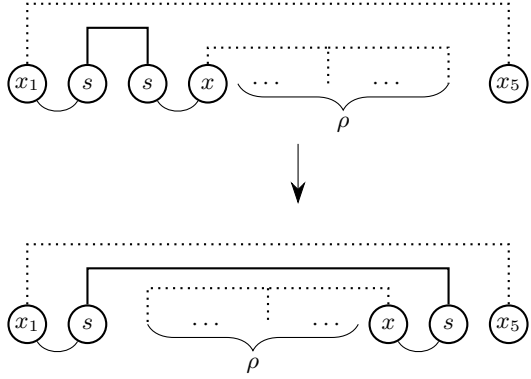
   }
   \end{center}
 The block containing $u $ in $ \pi^{\prime}_ {(u)} \vee \tau_{B^\prime}$ is then $\{j, j+1, \dots, u\} $, hence $ (j, u)$ is of type 5 in 
 $(\pi^\prime, B^\prime, D^\prime)$.
 All $s$-blocks of $(\pi^\prime, B^\prime, D^\prime)$ less than $(j, u)$ are $s$-blocks of $(\pi, B, D)$ less than $(j, j+1)$, hence of type 1, so $(\pi^\prime, B^\prime, D^\prime) \in \mathcal{A}_5$.
 
 If $(y_1, \dots, y_{n+k}) =
 \big(
 (x, s)_{ | \iota_{B^\prime} (B^\prime \setminus D^\prime)} , (s, x)_{| \iota_{B^\prime} (D^\prime)} , x_{| \iota_{B^\prime}([n] \setminus {B^\prime} )}
 \big) $
 then we have that 
 $y_t = x_t$
 for $ t \notin \{j+1, j+2, \dots, u\} $,
 $ y_u = x_{j+1}$,
 $y_{u_1}= x_{j+2} $
 and 
 $ y_{v-2}= x_v $
 for 
 $ v \in \{j+3, \dots, u-1, u\}$,
 therefore
 $\kappa_\pi\big[x_1, x_2, \dots, x_{n+k}] = \kappa_{\pi^\prime}[ y_1, y_2, \dots, y_{n+k}]$, which, together with (\ref{eq:3-5}), gives that
 $ E(\pi, B, D) = -E(\pi^\prime, B^\prime, D^\prime)$.
 
 For the case $(\pi, B, D) \in \mathcal{A}_3 $ and $ (1, n+k)$  $s$-block of type 3, let 
 \textrm{Bl}$(1) = \{1, 2, \dots, u\}$.
  Noting that $u < n +k-1 $, that $1 \in  D $
  and letting $a = \iota_B(u)$, we define
   $(\pi^\prime, B^\prime, D^\prime)$ 
   as follows.
   \begin{align*}
   B^\prime & = 
   B \cap \big([n]\setminus \{ 1, 2,\dots, a \}  \big) 
   \cup\{a\} \cup \lambda\big(B \cap \{ 2, 3, \dots, a \}\\
   D^\prime & =
   D \cap \big( [n]\setminus \{ 1, 2,\dots, a \}  \big) 
   \cup \lambda \big( D \cap \{ 2, 3, \dots, a\} \big)
   \end{align*}
  so $ | B^\prime | = | B | $ and 
  $ | D^\prime | = | D | - 1 $.
 
   For $\pi^\prime $, we let
    $\pi^\prime_{| \{u+1, u+2, \dots, n+k-1 \}} = \pi_{| \{u+1, u+2, \dots, n+k-1 \}}$ 
    and if the restriction of $\pi $ to 
    $\{1, 2, \dots, u\} \cup\{n+k\} $
    has the blocks $(1, n+k)$, $B_1, B_2, \dots, B_q$ 
    with
    $2\in B_2$, 
    then 
    $\pi^\prime_{|\{1, 2, \dots, u\} \cup\{n+k\} } $ 
    has the blocks
    $(u, n+k)$, $\lambda^2\big( B_1 \setminus\{2\} \cup\{u-1\} \big)$, $\lambda^2\big(B_2\big), \dots, \lambda^2\big(B_q\big) $ (see Figure \ref{fig:7}) and the rest of the proof is the same as above.
    
       \begin{center}
       	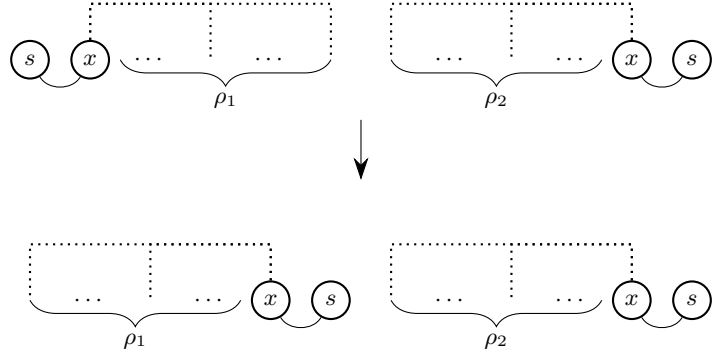
\begin{figure}[H]
    	\begin{tikzpicture}[
    		scale=0.8,
    		baseline=(current bounding box.center),
    		every node/.style={font=\small},
    		var/.style={
    			draw,
    			circle,
    			minimum size=5mm,
    			inner sep=1pt,
    			thick
    		},
    		>=Stealth
    		]
    		

    		\begin{scope}[yshift=0cm]
    			\coordinate (s1) at (0,0);
    			\coordinate (x1) at (1,0);
    			\coordinate (dots1) at (2.0,0);
    			\coordinate (x2) at (3.0,0);
    			\coordinate (dots2) at (4.0,0);
    			\coordinate (x3) at (5.0,0);
    			
    			\coordinate (x4) at (6.0,0);
    			\coordinate (dots3) at (7.0,0);
    			\coordinate (x6) at (8.0,0);
    			\coordinate (dots4) at (9.0,0);
    			\coordinate (x7) at (10.0,0);
    			\coordinate (s2) at (11.0,0);
    			\node[var] (S1) at (s1) {$s$};
    			\node[var] (X1) at (x1) {$x$};
    			\node[var] (X7) at (x7) {$x$};
    			\node[var] (S2) at (s2) {$s$};
    			
    			\node at (dots1) {$\cdots$};
    			\node at (dots2) {$\cdots$};
    			\node at (dots3) {$\cdots$};
    			\node at (dots4) {$\cdots$};
    			
    			
    			\draw (X1) to[out=240, in=300] (S1);
    			\draw (S2) to[out=240, in=300] (X7);
    			\draw[thick,dotted] (X1.north) |- ($(X1.north)+(0,0.6)$)-|(x3.north);
    			\draw[thick,dotted] (X1.north) |- ($(X1.north)+(0,0.6)$)-|(x2.north);
    			\draw[thick,dotted] (X7.north) |- ($(X7.north)+(0,0.6)$)-|(x6.north);
    			\draw[thick,dotted] (X7.north) |- ($(X7.north)+(0,0.6)$)-|(x4.north);
    			\draw[decorate, decoration={brace, amplitude=10pt, mirror}]
    			($(x1.south east)+ (0.5,0)$) -- ($(x3.south east)$)
    			node[midway, below=9pt] {$\rho_1$};
    			\draw[decorate, decoration={brace, amplitude=10pt, mirror}]
    			($(x4.south east)$) -- ($(x7.south east)+ (-0.5,0)$)
    			node[midway, below=9pt] {$\rho_2$};
    		\end{scope}

    		\begin{scope}[yshift=-2cm]
    			\draw [-{Stealth[length=3mm, width=2mm]}] (5.5,1) -- (5.5,0);
    		\end{scope}
    		
    		    		\begin{scope}[yshift=-4cm]
    			\coordinate (s1) at (5,0);
    			\coordinate (x1) at (4,0);
    			\coordinate (dots1) at (1.0,0);
    			\coordinate (x2) at (0.0,0);
    			\coordinate (dots2) at (3.0,0);
    			\coordinate (x3) at (2.0,0);
    			
    			\coordinate (x4) at (6.0,0);
    			\coordinate (dots3) at (7.0,0);
    			\coordinate (x6) at (8.0,0);
    			\coordinate (dots4) at (9.0,0);
    			\coordinate (x7) at (10.0,0);
    			\coordinate (s2) at (11.0,0);
    			\node[var] (S1) at (s1) {$s$};
    			\node[var] (X1) at (x1) {$x$};
    			\node[var] (X7) at (x7) {$x$};
    			\node[var] (S2) at (s2) {$s$};
    			
    			\node at (dots1) {$\cdots$};
    			\node at (dots2) {$\cdots$};
    			\node at (dots3) {$\cdots$};
    			\node at (dots4) {$\cdots$};
    			
    			
    			\draw (X1) to[out=300, in=240] (S1);
    			\draw (S2) to[out=240, in=300] (X7);
    			\draw[thick,dotted] (X1.north) |- ($(X1.north)+(0,0.6)$)-|(x3.north);
    			\draw[thick,dotted] (X1.north) |- ($(X1.north)+(0,0.6)$)-|(x2.north);
    			\draw[thick,dotted] (X7.north) |- ($(X7.north)+(0,0.6)$)-|(x6.north);
    			\draw[thick,dotted] (X7.north) |- ($(X7.north)+(0,0.6)$)-|(x4.north);
    			\draw[decorate, decoration={brace, amplitude=10pt, mirror}]
    			($(x2.south east)$) -- ($(x1.south east)+ (-0.5,0)$)
    			node[midway, below=9pt] {$\rho_1$};
    			\draw[decorate, decoration={brace, amplitude=10pt, mirror}]
    			($(x4.south east)$) -- ($(x7.south east)+ (-0.5,0)$)
    			node[midway, below=9pt] {$\rho_2$};
    		\end{scope}    		
    		
    	\end{tikzpicture}
    	\caption{Graphical representation of $\Phi_3$.}
    	\label{fig:7}
    \end{figure}
    \end{center}

    We will show next that for fixed $B,D$ there is a minimal partition $\pi$ such that $(\pi,B,D)\in \cA_1$. Consider first the case $B=[n]$. After circular permutation we may assume that the cumulant is of the form $\kappa_{n}(xs,sx,xs,sx,\ldots,xs,sx)$. In order for $\pi$ to be in the set $\cA_1$ the partition $\pi$ joins neighboring $s$. Suppose that in $\pi$ position of one of the $x$'s is a singleton, suppose that $x_j=x$ and $j$ is a singleton of $\pi$ then either $x_{j+1}=s$ or $x_{j-1}=s$ depending if $x$ comes form the pair $sx$ or $xs$. Let us consider the case $x_{j+1}=s$ (the case $x_{j-1}=s$ can be treated similarly) in such situation $B(j+1)=\{j,j+1\}$ which is a type 3 block, so we are not in the set $\cA_1$.
    
    Recall that $x_1=x$ and suppose that $x_1$ is not connected with $x_{2n}=x$, then since it is not a singleton, it is connected to some $x_k=x$. Observe that this $x$ cannot come from $sx$, as this would violate the maximality condition $\pi\vee\tau_B=1_{[2n]}$, thus such $x_1$ is connected to $x_j=x$ and $x_{j+1}=s$, since $\pi\in \cA_1$, then $j+1$ and $j+2$ are in the same block of $\pi$, but this leads to the conclusion that $Bl(j+2)$ is a block of type 3, which is contradicts the fact that $\pi\in \cA_1$. We conclude that first $x$ is connected with the last $x$ and by circular permutation of arguments of the free cumulant we conclude that all other $x$'s are joined by $\pi$ with the neighboring $x$. Such a partition clearly belongs to $\cA_1$, as we argued any smaller partition is not in $\cA_1$, moreover any bigger partition is also in $\cA_1$.
    
    Let us now consider the general case when we allow $B\neq [n]$, then we have to consider cumulants, where we allow some number of $x$'s between $sx$ and $xs$, recall that cumulants of the type $\kappa_n(\ldots,xs,x,x,sx,\ldots)$ do not contribute to $E(\pi,B,D)$ for $\pi\in \cA_1$. Suppose that $x_j=s, x_{j+1}=x,x_{j+2}=x,\ldots,x_{j+k}=x,x_{j+k+1}=x,x_{j+k+2}=s$. Then similar argument as before shows that $\pi$ contains a block joining $j+1$ and $j+k+1$, and then since  $\pi\vee \tau_B$ consists of one block we see that in $\pi$ all positions $\{j+1,\ldots,j+k+1\}$ are in the same block, and again we have a minimal partition.    
	
%
%
   \end{proof}

\section{Operator model}

Recall that compactly supported measure $\mu$ is compound free Poisson if and only if there exists compactly supported measure $\rho$ such that the sequence of free cumulants $(\kappa_n(\mu))_{n\geq 1}$ is a sequence of moments of a compactly supported measure. So there exists a compactly supported measure $\rho$ on $\mathbb{R}$ such that
\[\kappa_{n}(\mu)=\int_{\mathbb{R}} t^n d\rho(t),\qquad n\geq 0.\] 
Since dilation of a measure does not affect its infinite divisibility, we will assume that $\kappa_2(\mu)=1$ and thus we will assume that $\rho$ is a compactly supported probability measure. Consider a random variable $Y$ with the distribution $\rho$. We fix a classical probability space $(\Omega,\mathcal{G},\mathbb{P})$ and the space of square integrable random variables $\mathcal{H}=L^2(\Omega,\mathcal{G},\mathbb{P})$, such that $Y\in\mathcal{H}$ we consider the space $\mathcal{F}(\mathcal{H})=\bigoplus_{n=1}^{\infty}  \mathcal{H}^{\otimes n}$ with the inner product 
\[\langle X_1\otimes\ldots\otimes X_n,Y_1\otimes\ldots\otimes Y_m\rangle=\delta_{n,m}\bE(X_1 Y_1)\ldots \bE(X_n Y_n).
\]

 We define operators $\widehat{x}, \widehat{xs},\widehat{sx}$ on $\mathcal{F}(\mathcal{H})$ via
\begin{enumerate}

	\item \begin{align*}
		\widehat{x}\left(Y_1\otimes\ldots\otimes Y_n \right)=\begin{cases}
			Y_1\otimes\ldots\otimes Y_n Y\qquad &\mbox{if $n$ is odd,}\\
			0 \qquad &\mbox{if $n$ is even.}
		\end{cases}
	\end{align*}
	\item \begin{align*}
		\widehat{xs}\left(Y_1\otimes\ldots\otimes Y_n\right)=\begin{cases}
			Y_1\otimes\ldots\otimes Y_n\otimes Y+Y_1\otimes\ldots\otimes Y_{n-1}Y\mathbb{E}(Y_n)\qquad &\mbox{if $n$ is even,}\\
			0 \qquad &\mbox{if $n$ is odd.}
		\end{cases}
	\end{align*}
	\item \begin{align*}
		\widehat{sx}\left(Y_1\otimes\ldots\otimes Y_n\right)=\begin{cases}
			Y_1\otimes\ldots\otimes Y_n Y\otimes 1\qquad &\mbox{if $n=1$,}\\
			Y_1\otimes\ldots\otimes Y_n Y\otimes 1+Y_1\otimes\ldots\otimes Y_{n-1}\mathbb{E}(YY_n)\qquad &\mbox{if $n>1$ and is odd,}\\
			0 \qquad &\mbox{if $n$ is even.}
		\end{cases}
	\end{align*}
\end{enumerate}

  Remark that $ \widehat{x}$ is selfadjoint and that $ \widehat{xs}^\ast = \widehat{sx} $, henceforth 
  $ \widehat{x} + \widehat{sx} + \widehat{sx} $ is selfadjoint.
  
Next, we define the additional operators:
\begin{enumerate}
	\item \begin{align*}
	\widetilde{x}\left(Y_1\otimes\ldots\otimes Y_n \right)=\begin{cases}
		Y_1\otimes\ldots\otimes Y_n Y\qquad &\mbox{if $n$ is even,}\\
		0 \qquad &\mbox{if $n$ is odd.}
	\end{cases}
\end{align*}
\item \begin{align*}
	\widetilde{xs}\left(Y_1\otimes\ldots\otimes Y_n\right)=\begin{cases}
		Y_1\otimes\ldots\otimes Y_n\otimes Y\qquad&\mbox{if $n=1$,}\\
		Y_1\otimes\ldots\otimes Y_n\otimes Y+Y_1\otimes\ldots\otimes Y_{n-1}Y\mathbb{E}(Y_n)\qquad &\mbox{if $n>1$ and odd,}\\
		0 \qquad &\mbox{if $n$ is even.}
	\end{cases}
\end{align*}
\item \begin{align*}
	\widetilde{sx}\left(Y_1\otimes\ldots\otimes Y_n\right)=\begin{cases}
		Y_1\otimes\ldots\otimes Y_n Y\otimes 1+Y_1\otimes\ldots\otimes Y_{n-1}\mathbb{E}(YY_n)\qquad &\mbox{if $n$ is even,}\\
		0 \qquad &\mbox{if $n$ is odd.}
	\end{cases}
\end{align*}
	\end{enumerate}

 We also have that $ \widetilde{x} $ is selfadjoint 
 and that 
 $ \widetilde{xs}^\ast  = \widetilde{sx} $, 
 henceforth
 $ \widetilde{x} + \widetilde{xs} + \widetilde{sx} $ 
 is selfadjoint.
 
%
\begin{thm}\label{thm:2}
	If $s$ and $x $ are free non-commutative random variables such that $s $ is semicircular and $ x $ is infinitely divisible, then $ x + i[x, s] $ is infinitely divisible.
\end{thm}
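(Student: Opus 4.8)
The plan is to prove the statement first under the stronger hypothesis that $x$ is compound free Poisson, and then to reach the general freely infinitely divisible (FID) case by approximation. Recall the characterization stated at the opening of this section: a compactly supported distribution is compound free Poisson precisely when its free cumulant sequence $(\kappa_n)_{n\geq 1}$ is the moment sequence of a compactly supported positive measure. Since compound free Poisson distributions are weakly dense among FID distributions and the class of FID distributions is closed under weak convergence, it suffices to show that whenever $x$ is compound free Poisson, the sequence $(\kappa_n(x+i[x,s]))_{n\geq 1}$ is again the moment sequence of a compactly supported positive measure. Granting this, one approximates a general FID $x$ by compound free Poisson elements $x_m$ with $\mu_{x_m}\to\mu_x$; because each moment of $x_m+i[x_m,s]$ is a fixed polynomial in the moments of $x_m$ (by freeness with the fixed semicircular $s$), moment convergence holds, and with uniformly bounded supports this yields $\mu_{x_m+i[x_m,s]}\to\mu_{x+i[x,s]}$, so that $x+i[x,s]$ is FID as a weak limit of FID elements.

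So assume $x$ is compound free Poisson, normalised so that $\kappa_n(x)=\int_{\mathbb{R}}t^{n}\,d\rho(t)$ for a compactly supported probability measure $\rho$, and let $Y\in\mathcal{H}=L^2(\Omega,\mathcal{G},\mathbb{P})$ have distribution $\rho$. On $\mathcal{F}(\mathcal{H})$ I would consider the bounded self-adjoint operator
\[
T=\widehat{x}+\widehat{xs}+\widehat{sx},
\]
which is self-adjoint since $\widehat{x}^{\ast}=\widehat{x}$ and $\widehat{xs}^{\ast}=\widehat{sx}$, as already observed. Writing $\mathbf{1}\in\mathcal{H}$ for the constant function, a unit vector because $\mathbb{E}(\mathbf{1}\cdot\mathbf{1})=1$, regarded as an element of the first summand $\mathcal{H}^{\otimes 1}$, the heart of the argument is the identity
\[
\langle \mathbf{1},\,T^{n}\,\mathbf{1}\rangle=\kappa_n(x+i[x,s]),\qquad n\geq 1,
\]
asserting that the moments of $T$ in the vector state at $\mathbf{1}$ reproduce exactly the free cumulants computed in Proposition~\ref{prop:41}.

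To prove this identity one expands $T^{n}$ as a sum over words of length $n$ in the three operators and evaluates each word on $\mathbf{1}$. The parity bookkeeping built into the definitions is the combinatorial engine: $\widehat{x}$ acts nontrivially only on odd tensors and preserves length, $\widehat{sx}$ sends odd tensors to even ones, and $\widehat{xs}$ sends even tensors back to odd ones. Hence, since we begin and end in the odd summand $\mathcal{H}^{\otimes 1}$, the surviving words are precisely those in which the creation parts of $\widehat{sx}$ and $\widehat{xs}$ are matched with the contraction parts in a non-crossing fashion, forcing the two $s$'s of each matched pair to be adjacent; the intervening runs of $\widehat{x}$ assemble into the blocks of an interval partition $\sigma\in\mathrm{Int}_{\geq 2}(n)$, the matching of the pairs records a $\pi\in\mathrm{NC}(k)$ of those blocks, and the factors $\mathbb{E}$ in the contraction terms produce exactly $\kappa_{\pi(\sigma)}(x)=\prod_{Z}\int t^{|Z|}\,d\rho$. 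The one delicate point is the multiplicity $|V_1|$ attached in \eqref{eq:41} to the distinguished first block: in the vector-state computation it reflects the $|V_1|$ positions at which the fixed base leg $\mathbf{1}$ may sit inside that block, and capturing it correctly is precisely the role of the second family $\widetilde{x},\widetilde{xs},\widetilde{sx}$, which govern the boundary leg of the Fock vector. Verifying that the resulting weighted count of surviving words coincides term-by-term with the triple sum of Proposition~\ref{prop:41}, in particular producing the factor $|V_1|$, is the main obstacle of the proof.

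Once the moment identity is established the conclusion is immediate. As $\rho$ has compact support $T$ is bounded and self-adjoint, so its spectral distribution $\mu_T$ in the state $\langle\mathbf{1},\,\cdot\,\mathbf{1}\rangle$ is a compactly supported probability measure with $\int_{\mathbb{R}}t^{n}\,d\mu_T(t)=\langle\mathbf{1},T^{n}\mathbf{1}\rangle=\kappa_n(x+i[x,s])$ for all $n\geq 1$. Thus the free cumulant sequence of $x+i[x,s]$ is the moment sequence of a compactly supported positive measure, so by the characterization recalled above $x+i[x,s]$ is compound free Poisson, in particular FID. Combined with the density and weak-continuity argument of the first paragraph, this establishes that $x+i[x,s]$ is freely infinitely divisible for every FID $x$.
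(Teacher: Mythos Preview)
Your global architecture---prove the compound free Poisson case via an operator model whose vector-state moments reproduce the free cumulants of $x+i[x,s]$, then pass to general FID $x$ by density and weak closure---is exactly the paper's. The difficulty is that the specific identity you write down,
\[
\langle \mathbf{1},\,T^{n}\,\mathbf{1}\rangle=\kappa_n\big(x+i[x,s]\big)
\quad\text{with}\quad T=\widehat{x}+\widehat{xs}+\widehat{sx},
\]
is not correct, and your remark that the tilde family ``governs the boundary leg'' does not repair it, because $\widetilde{x},\widetilde{xs},\widetilde{sx}$ never appear in $T$. What the paper actually establishes is
\[
\langle (\widehat{x}+\widehat{xs}+\widehat{sx})^{n}\mathbf{1},\mathbf{1}\rangle
\;+\;
\langle (\widetilde{x}+\widetilde{xs}+\widetilde{sx})^{n}\mathbf{1},\mathbf{1}\rangle
\;=\;\kappa_n\big(x+i[x,s]\big),
\]
a \emph{sum} of two separate vector-state moment sequences. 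The split is forced by the combinatorics of Proposition~\ref{prop:41}: after unrolling the cyclic shifts responsible for the factor $|V_1|$, the contributing configurations fall into two disjoint families---those in which the first and last letters of the underlying word are not $s$ (captured by the hat operators, starting and ending on odd tensors) and those in which both boundary letters are $s$ (captured by the tilde operators, with reversed parity). A single operator $T$ built only from the hat family misses the second class entirely; in particular it would not even recover $\kappa_2(x+i[x,s])$ correctly. Your interpretation of $|V_1|$ as ``positions at which $\mathbf{1}$ may sit'' is also off: that factor arises from the cyclic permutations in the proof of Proposition~\ref{prop:41}, and in the operator model it is accounted for precisely by the two-family split, not by a multiplicity within one family.

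Once you replace your single identity by the sum of the two, the remainder of your argument goes through unchanged: each of $\widehat{x}+\widehat{xs}+\widehat{sx}$ and $\widetilde{x}+\widetilde{xs}+\widetilde{sx}$ is bounded and self-adjoint, so each contributes the moment sequence of a compactly supported positive measure, and the sum of two such moment sequences is again the moment sequence of a compactly supported positive measure (namely the sum of the measures). Hence $(\kappa_n(x+i[x,s]))_{n\geq 1}$ is a moment sequence and $x+i[x,s]$ is compound free Poisson; the density/closure step then finishes the proof.
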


	\begin{proof}
	It suffices to show that  for every $n\geq 1$ we have
		\begin{align*}
			\langle (\widehat{x}+\widehat{xs}+\widehat{sx})^n 1,1\rangle+
			\langle (\widetilde{x}+\widetilde{xs}+\widetilde{sx})^n\rangle=\kappa_n(x+i[x,s]).
		\end{align*}
		The result will follow from the fact that compound free Poisson distributions are dense in freely infinitely divisible distributions (see, for example \cite{nisp}). Note that in view of Proposition \ref{prop:41} it suffices to show that
		\begin{align*}
			&	\langle (\widehat{x}+\widehat{xs}+\widehat{sx})^n 1,1\rangle=\sum_{k=0}^{\lfloor\tfrac{n}{2}\rfloor} \sum_{\substack{i_0+\ldots+i_k=n\\i_0,i_k\geq 1\\i_1,\ldots,i_{k-1}\geq 2}} \sum_{\pi\in NC_{irr}(k+1)}\varphi_\pi (Y^{i_0},Y^{i_1},\ldots,Y^{i_k})\\
		&	\langle 
			(\widetilde{x}+\widetilde{xs}+\widetilde{sx})^n
			 1,1\rangle=\sum_{k=1}^{\lfloor\tfrac{n}{2}\rfloor} \sum_{\substack{i_1+\ldots+i_k=n\\i_1,\ldots,i_{k}\geq 2}} \sum_{\pi\in NC(k)}\varphi_\pi (Y^{i_0},Y^{i_1},\ldots,Y^{i_k}).
		\end{align*}
Indeed, the first expression
 $\langle (\widehat{x}+\widehat{xs}+\widehat{sx})^n 1,1\rangle$
 covers all configurations in the expansion of $\kappa_n(x+i[x,s])$  when $s$ are not at the first and last position of the free cumulant. The second expression corresponds to the configurations in expansion of $\kappa_n(x+i[x,s])$ in which the first and the last element in the free cumulant are $s$.
We will prove the first equality above, as the proof of the second one follows the same steps.
Denote 
\begin{align*}
	\widehat{xs}^u\left(Y_1\otimes\ldots\otimes Y_n\right)=&\begin{cases}
		Y_1\otimes\ldots\otimes Y_n\otimes Y\qquad &\mbox{if $n$ is even,}\\
		0 \qquad &\mbox{if $n$ is odd,}
	\end{cases}
\\
	\widehat{xs}^d\left(Y_1\otimes\ldots\otimes Y_n\right)=&\begin{cases}
		Y_1\otimes\ldots\otimes Y_{n-1}Y\mathbb{E}(Y_n)\qquad &\mbox{if $n$ is even,}\\
		0 \qquad &\mbox{if $n$ is odd}
	\end{cases}
\end{align*}

and

\begin{align*}
			\widehat{sx}^u\left(Y_1\otimes\ldots\otimes Y_n\right)&=\begin{cases}
		Y_1\otimes\ldots\otimes Y_n Y\otimes 1\qquad &\mbox{if $n$ is odd,}\\
		0 \qquad &\mbox{if $n$ is even.}
	\end{cases}\\
			\widehat{sx}^d\left(Y_1\otimes\ldots\otimes Y_n\right)&=\begin{cases}
		Y_1\otimes\ldots\otimes Y_{n-1}\mathbb{E}(YY_n)\qquad &\mbox{if $n>1$ and is odd,}\\
		0 \qquad &\mbox{if $n$ is even.}
	\end{cases}
\end{align*}

We have that $\widehat{sx}=\widehat{sx}^d+\widehat{sx}^u$ and that $\widehat{xs}=\widehat{xs}^d+\widehat{xs}^u$.

Observe that only products of operators in which $\widehat{xs}$ is immediately followed by $\widehat{sx}$ give non-zero contribution, as they act on different length of tensor. More precisely $\widehat{sx}$ produces an even length tensor form an odd lenght tensor, and even lenght tensors are in its kernel. Likewise $\widehat{xs}$ produces an odd lenght tensor from odd length tensors.

Next we will show that each term from the sum in Proposition \ref{prop:41} is present in the expansion of $\langle (\widehat{x}+\widehat{xs}+\widehat{sx})^n 1,1\rangle$.
 Fix $k\in\{1,\ldots\lfloor \tfrac{n}{2}\rfloor\}$ and $i_0,\ldots,i_k$ such that $i_0+\ldots+i_k=n$ where $i_0,i_k\geq 1$ and $i_1,\ldots,i_{k-1}\geq 2$. 
 And in the expansion of 
 $\langle (\widehat{x}+\widehat{xs}+\widehat{sx})^n 1,1\rangle$ 
 consider the product
\begin{align*}
	\underbrace{\widehat{x}\cdot\ldots\cdot\widehat{x} \cdot \widehat{xs}}_{i_0} \cdot \underbrace{\widehat{sx}\cdot \widehat{x}\ldots\cdot \widehat{x} \cdot \widehat{xs}}_{i_1}\cdot\ldots\cdot \underbrace{\widehat{sx}\cdot\ldots\cdot \widehat{x} }_{i_k}
\end{align*}

The formula from Proposition \ref{prop:41} says that for each such fixed groups of $x$'s we join them in a non crossing way and each block contributes moment of $Y$ of order equal to the number of $x$'s in block. Moreover since we sum only over irreducible non-crossing partitions $\pi$, then $x$'s from groups corresponding to $i_0$ and $i_k$ are always in the same block.

We have
\begin{align*}
	&\langle\underbrace{\widehat{x}\cdot\ldots\cdot \widehat{x} \cdot \widehat{xs}}_{i_0} \cdot \underbrace{\widehat{sx}\cdot \widehat{x}\ldots\cdot \widehat{x} \cdot \widehat{xs}}_{i_1}\cdot\ldots\cdot \underbrace{\widehat{sx}\cdot\ldots\cdot \widehat{x} }_{i_k} 1,1\rangle\\
	=&\langle\underbrace{\widehat{x}\cdot\ldots\cdot \widehat{x} \cdot \widehat{xs}}_{i_0} \cdot \underbrace{\widehat{sx}\cdot \widehat{x}\ldots\cdot \widehat{x} \cdot \widehat{xs}}_{i_1}\cdot\ldots\cdot \widehat{sx} \cdot Y^{i_{k}-1} ,1\rangle
\end{align*}
On length one tensors $\widehat{sx}^d$ gives zero and $\widehat{sx}^u \left(Y^{i_{k}-1}\right)=Y^{i_k}\otimes 1$. Next we apply to this either $\widehat{xs}^d$ or $\widehat{xs}^u$. We have
\begin{align*}
	\widehat{xs}^d(Y^{i_k}\otimes 1)&=Y^{i_k}\\
	\widehat{xs}^u(Y^{i_k}\otimes 1)&=Y^{i_k}\otimes 1\otimes Y
\end{align*}
Next we will apply  times the operator
 $\widehat{x}^{i_{k-1}-1}$
  to $Y^{i_k}$ and to $Y^{i_k}\otimes 1\otimes Y$. First situation corresponds to the case when in the irreducible partition $\pi$ numbers $k-1$ and $k$ are in the same block. Second situation corresponds to the case when $k-1$ is in block nested inside the block of $k$. For a non-crossing irreducible partition of the set $\{1,\ldots,k\}$ this exhausts all possibilities how numbers $k-1$ and $k$ can be placed relative to each other.

It is useful to observe that for $\pi\in NC(n)$ and $j\in\{2,\ldots,k\}$ there are four possibilities how $j-1$ and $j$ are placed relative to each other: 
\begin{itemize}
	\item[$1^o$] block containing $j$ is nested inside the block containing $j-1$,
	\item[$2^o$]  block containing $j-1$ is nested inside the block containing $j$,
	\item[$3^o$]  block containing $j-1$ and $j$ is are in the same block,
	\item[$4^o$] blocks containing $j-1$ and $j$ are adjacent to each other.
\end{itemize}
Moreover partition $\pi$ of $\{1,\ldots,k\}$ is non-crossing if and only if for all $j\in\{2,\ldots,k\}$ pair $j-1,j$ falls in one of the four cases above.

Next we will consider an odd length tensor of lenght at least three (observe that from construction, on even positions in each tensor we only have the constant random variable $1$

Consider next
\[\underbrace{\widehat{x}\cdot\ldots\cdot \widehat{x} \cdot \widehat{xs}}_{i_0}
 \cdot \underbrace{\widehat{sx}\cdot \widehat{x}\ldots\cdot \widehat{x} \cdot \widehat{xs}}_{i_1}
 \cdot\ldots\cdot \underbrace{\widehat{sx}\cdot\ldots\cdot \widehat{xs} }_{i_{j-1}}\cdot \widehat{sx} (\ldots Y_{l_{r-1}}\otimes 1 \otimes Y_{l_{r}})\]
We will analyze what we get from the application of $\widehat{xs}\cdot \widehat{sx}=(\widehat{xs}^u+\widehat{xs}^d)(\widehat{sx}^u+\widehat{sx}^d)$, the four terms we get correspond exactly to the four possible relative positions of $j-1$ and $j$ in a non--crossing partition.
\begin{itemize}
	\item Case $1^o$ above corresponds to 
	\begin{align*}
		\widehat{xs}^d\cdot \widehat{sx}^d (\ldots Y_{l_{r-1}}\otimes 1 \otimes Y_{l_{r}})&=\bE(Y_{l_{r}}Y)\widehat{xs}^d( \ldots\otimes Y_{l_{r-1}} \otimes 1)\\
		&=\bE(Y_{l_{r}}Y)( \ldots\otimes Y_{l_{r-1}}Y ).
	\end{align*}
	\item Case $2^o$ above corresponds to 
	\begin{align*}
		\widehat{xs}^u\cdot \widehat{sx}^u (\ldots Y_{l_{r-1}}\otimes 1 \otimes Y_{l_{r}})&=\widehat{xs}^u(\ldots Y_{l_{r-1}}\otimes 1 \otimes Y_{l_{r}}\otimes 1)\\
		&=\ldots Y_{l_{r-1}}\otimes 1 \otimes Y_{l_{r}}\otimes 1\otimes Y.
	\end{align*}
	\item Case $3^o$ above corresponds to 
	\begin{align*}
		\widehat{xs}^d\cdot \widehat{sx}^u (\ldots Y_{l_{r-1}}\otimes 1 \otimes Y_{l_{r}})&=\widehat{xs}^d( \ldots\otimes Y_{l_{r}}Y \otimes 1)\\
		&=\bE(Y_{l_{r}}Y)( \ldots\otimes Y_{l_{r}}Y^2 \otimes 1\otimes Y).
	\end{align*}
	\item Case $4^o$ above corresponds to 
	\begin{align*}
		\widehat{xs}^u\cdot \widehat{sx}^d (\ldots Y_{l_{r-1}}\otimes 1 \otimes Y_{l_{r}})&=\bE(Y_{l_{r}}Y)\widehat{xs}^u( \ldots\otimes Y_{l_{r-1}} \otimes 1)\\
		&=\bE(Y_{l_{r}}Y)( \ldots\otimes Y_{l_{r-1}} \otimes 1\otimes Y).
	\end{align*}
\end{itemize}

Thus products  $\widehat{xs}\cdot \widehat{sx}$ produce all possible non--crossing partitions, the operator $\widehat{x}$ only increases the power of $Y$.
 In order for the inner product \[\langle\underbrace{\widehat{x}\cdot\ldots\cdot \widehat{x} \cdot \widehat{xs}}_{i_0} \cdot \underbrace{\widehat{sx}\cdot \widehat{x}\ldots\cdot \widehat{x} \cdot \widehat{xs}}_{i_1}\cdot\ldots\cdot \underbrace{\widehat{sx}\cdot\widehat{x}\cdot\ldots\cdot \widehat{x} }_{i_k} 1,1\rangle\] 
 to be non--zero we need to finish with a length one tensor after applications of all operators, which implies that we get a power of $Y$ corresponding to sum of all $i_j$ for which $j$ is in the outer block.

One proves the second equality in exactly same way.
\end{proof}
\section*{Acknowledgement}
We would like to thank JC Wang for the very helpful and inspiring discussions, without which these results would not have been obtained, as well as for the generous support during the 2024 visit to University of Saskatchewan.

\bigskip


\bibliographystyle{abbrv}


\end{document}